\documentclass{article}

\usepackage[english]{babel}
\usepackage{graphicx,mathdots,amsfonts,amssymb,amsmath,amsthm,mathrsfs,epstopdf,enumitem,color,url,booktabs}
\usepackage[top=2cm,bottom=3cm,left=2cm,right=2cm]{geometry}
\usepackage{sectsty}\sectionfont{\normalsize}\subsectionfont{\normalsize}

\newtheorem{theorem}{Theorem}[section]

\newtheorem{lemma}{Lemma}[section]

\theoremstyle{definition}

\newtheorem{remark}{Remark}[section]

\numberwithin{equation}{section}
\numberwithin{figure}{section}
\numberwithin{table}{section}

\newcommand{\e}{{\rm e}}
\renewcommand{\i}{{\rm i}}
\renewcommand{\epsilon}{\varepsilon}
\renewcommand{\phi}{\varphi}
\newcommand{\ee}{{\mathbf e}}
\newcommand{\ppp}{{\mathbf p}}
\newcommand{\xx}{{\mathbf x}}
\newcommand{\yy}{{\mathbf y}}
\newcommand{\uu}{{\mathbf u}}
\newcommand{\vv}{{\mathbf v}}
\newcommand{\ww}{{\mathbf w}}
\newcommand{\WW}{{\mathbf W}}
\newcommand{\XX}{{\mathbf X}}
\newcommand{\ii}{{\boldsymbol i}}
\newcommand{\jj}{{\boldsymbol j}}
\newcommand{\kk}{{\boldsymbol k}}
\newcommand{\pp}{{\boldsymbol p}}
\newcommand{\qq}{{\boldsymbol q}}
\newcommand{\nn}{{\boldsymbol n}}
\newcommand{\bsigma}{{\boldsymbol\sigma}}
\newcommand{\bmu}{{\boldsymbol\mu}}

\begin{document}

\title{Eigenvalues and Eigenvectors of Tau Matrices\\ with Applications to Markov Processes and Economics}

\author{Sven-Erik Ekstr\"om\\[-2pt]
\footnotesize Department of Information Technology, Division of Scientific Computing, Uppsala University, Sweden\\[-2pt]
\footnotesize Faculty of Mathematics and Natural Sciences, Bergische Universit\"at Wuppertal, Germany\\[7pt]
Carlo Garoni\thanks{Correspondence to: Carlo Garoni (garoni@mat.uniroma2.it).}\\[-2pt]
\footnotesize Department of Mathematics, University of Rome Tor Vergata, Italy\\[7pt]
Adam Jozefiak\\[-2pt]
\footnotesize Department of Computer Science, University of British Columbia, Canada\\[7pt]
Jesse Perla\\[-2pt]
\footnotesize Vancouver School of Economics, University of British Columbia, Canada}

\maketitle

\begin{abstract}
In the context of matrix displacement decomposition, Bozzo and Di Fiore introduced the so-called $\tau_{\epsilon,\phi}$ algebra, a generalization of the more known $\tau$ algebra originally proposed by Bini and Capovani. We study the properties of eigenvalues and eigenvectors of the generator $T_{n,\epsilon,\phi}$ of the $\tau_{\epsilon,\phi}$ algebra. In particular, we derive the asymptotics for the outliers of $T_{n,\epsilon,\phi}$ and the associated eigenvectors; we obtain equations for the eigenvalues of $T_{n,\epsilon,\phi}$, which provide also the eigenvectors of $T_{n,\epsilon,\phi}$; and we compute the full eigendecomposition of $T_{n,\epsilon,\phi}$ in the specific case $\epsilon\phi=1$. We also present applications of our results in the context of queuing models, random walks, and diffusion processes, with a special attention to their implications in the study of wealth/income inequality and portfolio dynamics.

\smallskip

\noindent{\em Keywords:} eigenvalues and eigenvectors, tau matrices, queuing models, random walks, diffusion processes, wealth and income inequality, portfolio dynamics

\smallskip

\noindent{\em 2020 MSC:} 15A18, 15B05, 60K25, 60G50, 60J60, 91G10
\end{abstract}

\section{Introduction}\label{sec:intro}
Consider the $n\times n$ matrix 
\begin{equation*}\label{Tep}
T_{n,\epsilon,\phi}=\begin{bmatrix}
\epsilon & 1 & & & \\
1 & 0 & 1 & & \\
& \ddots & \ddots & \ddots & \\
& & 1 & 0 & 1\\
& & & 1 & \phi
\end{bmatrix},
\end{equation*}
where $\epsilon,\phi\in\mathbb R$ are given parameters.
For $\epsilon,\phi\in\{0,1,-1\}$, the eigendecomposition of $T_{n,\epsilon,\phi}$ is already available in the literature. In particular, for $(\epsilon,\phi)=(0,0)$, the matrix $T_{n,\epsilon,\phi}=T_{n,0,0}$ is the generator of the $\tau$ algebra originally introduced by Bini and Capovani~\cite{BC-tau}; its eigendecomposition, as well as the eigendecomposition of any tridiagonal Toeplitz matrix, has long been known \cite[Section~2.2]{BG}. For $(\epsilon,\phi)\ne(0,0)$, the matrix $T_{n,\epsilon,\phi}$ is the generator of the so-called $\tau_{\epsilon,\phi}$ algebra introduced by Bozzo and Di~Fiore in \cite{BozzoDiFiore}; its eigendecomposition for $(\epsilon,\phi)=(1,1)$, $(-1,-1)$, $(1,-1)$, $(-1,1)$ was provided in \cite[Section~4]{BozzoDiFiore}. Finally, for $(\epsilon,\phi)=(1,0)$, $(0,1)$, $(-1,0)$, $(0,-1)$---actually for all $\epsilon,\phi\in\{0,1,-1\}$---the eigendecomposition of $T_{n,\epsilon,\phi}$ can be derived, e.g., from the results in \cite[Appendix~1]{tau-appendix}; see in particular \cite[pp.~394--395]{tau-appendix}.

For all $\epsilon,\phi\in\mathbb R$, the asymptotic spectral distribution of $T_{n,\epsilon,\phi}$ in Weyl's sense can be easily obtained from the theory of generalized locally Toeplitz sequences \cite{GLTbookI,GLTbookII}, which immediately yields for $T_{n,\epsilon,\phi}$ the asymptotic spectral distribution function (or symbol) $2\cos\theta$. Precise eigenvalue estimates can also be given on the basis of classical interlacing results \cite[Section~4.3]{HJ} after observing that $T_{n,\epsilon,\phi}$ is a small-rank perturbation of $T_{n,0,0}$ and the eigenvalues of $T_{n,0,0}$ are known. It should be noted, however, that both asymptotic spectral distribution results and interlacing estimates completely ignore the outliers of $T_{n,\epsilon,\phi}$, i.e., the eigenvalues lying outside the interval $[-2,2]$ (the range of the symbol $2\cos\theta$). On the other hand, the outliers, which are determined by the parameters $\epsilon,\phi$, are precisely the objects one is interested in when dealing with several noteworthy applications. Such applications include, for example, queuing models and Markov chains/processes \cite{Baldi,BiniCIME,Giambene,lawler2006introduction}, where the eigenvector corresponding to the (unique) outlier of (a suitable transform of) $T_{n,\epsilon,\phi}$ corresponds to the steady-state distribution of the considered chain/process.

In this paper, we study the spectral properties of $T_{n,\epsilon,\phi}$ and present a few applications in the context of Markov chains/processes, with a special focus on queuing models, random walks, diffusion processes and economics issues.
The structure of the paper, including a summary of our contributions, is given below.
\begin{itemize}[nolistsep,leftmargin=*]
	\item In Section~\ref{sec:basic}, we study some basic spectral properties of $T_{n,\epsilon,\phi}$ that will simplify the analysis of later sections. 
	\item In Section~\ref{sec:aoto}, we derive the asymptotics of the outliers of $T_{n,\epsilon,\phi}$ and the associated eigenvectors. Our main results in this regard are Theorems~\ref{t_eps}--\ref{t_e=p}, which are validated through numerical experiments in Tables~\ref{T1}--\ref{T3}.
	\item In Section~\ref{eeT}, we derive equations for the eigenvalues of $T_{n,\epsilon,\phi}$. For all $\epsilon,\phi\in\mathbb R$ for which these equations can be solved, one obtains not only the eigenvalues but also the eigenvectors of $T_{n,\epsilon,\phi}$. Our main results in Section~\ref{eeT} are Theorems~\ref{p(-2,2)}--\ref{p=-2}. 
	\item In Section~\ref{sec:sep}, we solve the equations obtained in Section~\ref{eeT} for specific values of $\epsilon,\phi$. In particular, we show how it is possible to re-obtain through our equations the eigendecomposition of $T_{n,\epsilon,\phi}$ for $\epsilon,\phi\in\{0,1,-1\}$; and we address the new case $\epsilon\phi=1$, which is the case of interest for the applications presented in Section~\ref{sec:app}.
	\item In Section~\ref{sec:app}, we present a few applications in the context of Markov chains/processes, with a special focus on queuing models, random walks in a multidimensional lattice, multidimensional reflected diffusion processes and economics issues. In particular, we investigate the implications of our results within a model for wealth/income inequality and portfolio dynamics with an arbitrary number of assets: we provide analytical formulas for the steady-state (stationary) distribution of the underlying stochastic process (a multidimensional reflected diffusion process), we compute the convergence speed towards the steady state, and we also derive closed-form expressions for relevant moments of the stationary distribution such as the average wealth and the wealth variance.
	\item In Section~\ref{sec:conc}, we draw conclusions and outline possible future lines of research.
\end{itemize}

\section{Basic Properties of the Eigenvalues and Eigenvectors of $\boldsymbol{T_{n,\epsilon,\phi}}$}\label{sec:basic}

In this section, we collect some basic properties of the eigenvalues and eigenvectors of $T_{n,\epsilon,\phi}$ which will allow us to tackle the analysis of the next sections with useful a priori knowledge. Throughout this paper, the eigenvalues of $T_{n,\epsilon,\phi}$ which do not belong to the interval $[-2,2]$ are referred to as outliers. We denote by $\ee_1,\ldots,\ee_n$ the vectors of the canonical basis of $\mathbb R^n$, and by $E_n$ the symmetric permutation matrix (flip matrix) whose rows are those of the identity matrix $I_n$ in reverse order:
\[ E_n=\begin{bmatrix}& & 1\\ & \iddots & \\ 1 & & \end{bmatrix}. \]

\begin{theorem}\label{eig_props}
The following properties hold.
\begin{enumerate}[leftmargin=*,nolistsep]
	\item $T_{n,\phi,\epsilon}=E_nT_{n,\epsilon,\phi} E_n$.	It follows that $(\lambda,\uu)$ is an eigenpair of $T_{n,\epsilon,\phi}$ if and only if $(\lambda,E_n\uu)$ is an eigenpair of $T_{n,\phi,\epsilon}$.
	\item\label{vw} If $\epsilon\ne0$ and
	\[ \vv_n=[\epsilon^{-i+1}]_{i=1}^n=\begin{bmatrix}1\\ \epsilon^{-1}\\ \vdots\\ \epsilon^{-n+1}\end{bmatrix} \]
	then $T_{n,\epsilon,\phi}\vv_n-(\epsilon+\epsilon^{-1})\vv_n=\epsilon^{-n}(\epsilon\phi-1)\ee_n$. 
	Similarly, if $\phi\ne0$ and
	\[ \ww_n=[\phi^{-n+i}]_{i=1}^n=\begin{bmatrix}\phi^{-n+1}\\ \vdots\\ \phi^{-1}\\ 1\end{bmatrix} \]
	then $T_{n,\epsilon,\phi}\ww_n-(\phi+\phi^{-1})\ww_n=\phi^{-n}(\epsilon\phi-1)\ee_1$.
	\item\label{nondeg} $T_{n,\epsilon,\phi}$ has $n$ real distinct eigenvalues. 
	\item If $|\epsilon|,|\phi|\le1$ then all the eigenvalues of $T_{n,\epsilon,\phi}$ belong to $[-2,2]$.
	\item If $|\epsilon|\le1,\:|\phi|>1$ or $|\epsilon|>1,\:|\phi|\le1$ then all the eigenvalues of $T_{n,\epsilon,\phi}$ belong to $[-2,2]$ except for at most $1$ outlier.
	\item If $|\epsilon|,|\phi|>1$ then all the eigenvalues of $T_{n,\epsilon,\phi}$ belong to $[-2,2]$ except for at most $2$ outliers.
	\item If $|\epsilon|<1$ or $|\phi|<1$ then both $2$ and $-2$ are not eigenvalues of $T_{n,\epsilon,\phi}$.
\end{enumerate}
\end{theorem}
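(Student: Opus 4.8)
The plan is to dispatch items 1--3 by direct structural arguments, items 4--6 by eigenvalue localization plus low-rank interlacing, and to reserve the real work for item 7. For item 1 I would observe that $E_n$ is an orthogonal involution ($E_n=E_n^{\top}=E_n^{-1}$) and that conjugation by it maps the $(i,j)$ entry to the $(n+1-i,n+1-j)$ entry, which swaps the two corner entries $\epsilon,\phi$ while leaving the band of $1$'s intact; the eigenpair correspondence is then immediate from $E_n^2=I_n$. Item 2 is a one-line computation: the entries $\epsilon^{-i+1}$ of $\vv_n$ satisfy the interior three-term relation with ratio $\epsilon+\epsilon^{-1}$, so $T_{n,\epsilon,\phi}\vv_n-(\epsilon+\epsilon^{-1})\vv_n$ vanishes in every row except the last, where the defect is $\epsilon^{-n}(\epsilon\phi-1)$; the $\ww_n$ identity follows by symmetry (item 1). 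For item 3, symmetry forces real eigenvalues, and since every off-diagonal entry equals $1\neq0$ the matrix is an unreduced Jacobi matrix, so the first component of any eigenvector determines all the rest through the recurrence; hence each eigenspace is one-dimensional and, by symmetry, each eigenvalue is simple.

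For item 4 I would use Gershgorin: the interior rows give the interval $[-2,2]$, and the two boundary rows give $[\epsilon-1,\epsilon+1]$ and $[\phi-1,\phi+1]$, both inside $[-2,2]$ exactly when $|\epsilon|,|\phi|\le1$. Items 5 and 6 I would obtain by writing $T_{n,\epsilon,\phi}=T_{n,0,0}+\epsilon\,\ee_1\ee_1^{\top}+\phi\,\ee_n\ee_n^{\top}$ and recalling that $T_{n,0,0}$ has spectrum $\{2\cos\frac{k\pi}{n+1}\}\subset(-2,2)$. By Weyl's inequalities a Hermitian perturbation can create above $\lambda_{\max}$ (resp. below $\lambda_{\min}$) at most as many new eigenvalues as it has positive (resp. negative) eigenvalues, so a rank-$r$ perturbation produces at most $r$ outliers. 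In item 6 the perturbation has rank at most $2$, giving at most two outliers; in item 5, using item 1 to assume $|\epsilon|\le1<|\phi|$, I would instead write $T_{n,\epsilon,\phi}=T_{n,\epsilon,0}+\phi\,\ee_n\ee_n^{\top}$, note $T_{n,\epsilon,0}$ has spectrum in $[-2,2]$ by item 4, and conclude that the remaining rank-one perturbation yields at most one outlier.

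The crux is item 7, ruling out the exact endpoints $\lambda=\pm2$. Seeking $\uu$ with $T_{n,\epsilon,\phi}\uu=2\uu$, I would introduce virtual components $u_0:=\epsilon u_1$ and $u_{n+1}:=\phi u_n$ so that the two boundary equations become instances of the interior relation and $u_{i-1}+u_{i+1}=2u_i$ holds for all $i=1,\dots,n$. At $\lambda=2$ the characteristic polynomial $x^2-2x+1$ has the double root $1$, forcing the degenerate solution $u_i=\alpha+\beta i$. The conditions $u_0=\epsilon u_1$ and $u_{n+1}=\phi u_n$ then form a homogeneous $2\times2$ system in $(\alpha,\beta)$, and a nontrivial eigenvector exists iff its determinant vanishes, which simplifies to the symmetric bilinear equation
\[ n(\epsilon+\phi)-(n-1)\epsilon\phi=n+1. \]
For $\lambda=-2$ the double root is $-1$, so $u_i=(-1)^i(\alpha+\beta i)$, and the same bookkeeping --- or the conjugation $T_{n,\epsilon,\phi}\mapsto T_{n,-\epsilon,-\phi}$ by $\mathrm{diag}(1,-1,1,\dots)$ --- gives $-n(\epsilon+\phi)-(n-1)\epsilon\phi=n+1$.

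It remains to show these equations have no solution in the relevant regime. Setting $g(\epsilon,\phi):=(n+1)-n(\epsilon+\phi)+(n-1)\epsilon\phi$, I note that $g$ is affine in each variable separately, with $g(1,\phi)=1-\phi$ and $g(-1,\phi)=(2n+1)-(2n-1)\phi$, both strictly positive for $\phi\in(-1,1)$; hence on the square $|\epsilon|,|\phi|\le1$ one has $g\ge0$, with equality only at the vertex $(\epsilon,\phi)=(1,1)$. Thus the $\lambda=2$ condition can hold only at $(1,1)$, and symmetrically the $\lambda=-2$ condition only at $(-1,-1)$, both excluded by the hypothesis; so neither $2$ nor $-2$ is an eigenvalue. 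The main obstacle is precisely this last item: one has to treat the non-diagonalizable recurrence at the repeated characteristic root, turn the two corner rows into clean endpoint conditions, and then carry out the sign analysis of the resulting bilinear form. A robust alternative I would keep in reserve is a Hellmann--Feynman monotonicity argument: since an unreduced Jacobi eigenvector has nonzero first and last components, $\lambda_{\max}$ is strictly increasing in each of $\epsilon,\phi$ and equals $2$ only at $(1,1)$, which delivers item 7 for $\lambda=2$ directly.
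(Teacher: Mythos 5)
Items 1--6 of your proposal follow essentially the same route as the paper: direct computation for items 1--2, simplicity of the eigenvalues of an unreduced symmetric tridiagonal matrix for item 3 (the paper phrases this as ``nonderogatory Hessenberg plus diagonalizable,'' which is the same fact), Gershgorin for item 4, and the splittings $T_{n,\epsilon,\phi}=T_{n,\epsilon,0}+\phi\,\ee_n\ee_n^{\top}$ and $T_{n,\epsilon,\phi}=T_{n,0,0}+\epsilon\,\ee_1\ee_1^{\top}+\phi\,\ee_n\ee_n^{\top}$ combined with a low-rank perturbation bound for items 5--6 (you invoke Weyl's inequalities where the paper cites the interlacing result \cite[Corollary~4.3.3]{HJ}; these are interchangeable here). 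For item 7 you genuinely diverge: the paper's proof is a one-line appeal to irreducibility and the third Gershgorin theorem, whereas you solve the degenerate recurrence at the double characteristic root, obtain the exact criterion $n+1-(\epsilon+\phi)n+\epsilon\phi(n-1)=0$ for $\lambda=2$ (precisely equation \eqref{lambda_expr2} of Theorem~\ref{p=2}, which the paper only derives later in Section~\ref{case3}), and then perform a sign analysis of this bilinear form. Your route is longer but more informative, since it locates exactly where $\pm2$ can occur as eigenvalues.

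There is, however, a scope mismatch in your item 7 that you should be aware of. Your sign analysis (and your reserve Hellmann--Feynman argument) shows that $g(\epsilon,\phi)=(n+1)-n(\epsilon+\phi)+(n-1)\epsilon\phi$ vanishes only at $(1,1)$ \emph{on the closed square} $|\epsilon|,|\phi|\le1$, whereas the stated hypothesis is merely ``$|\epsilon|<1$ or $|\phi|<1$,'' which allows the other parameter to be arbitrary. Outside the square the claim actually fails: taking $\epsilon=0$ and $\phi=(n+1)/n$ gives $g=0$, so $2$ is an eigenvalue of $T_{n,0,(n+1)/n}$ (for $n=2$ the matrix $\left[\begin{smallmatrix}0&1\\1&3/2\end{smallmatrix}\right]$ has eigenvalues $2$ and $-1/2$). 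So the gap is not in your argument but in the statement itself: item 7 needs the additional standing hypothesis $|\epsilon|,|\phi|\le1$, under which your proof is complete. The paper's own proof carries the same implicit restriction, since the third Gershgorin theorem is only applicable when $\pm2$ is not an interior point of any Gershgorin disc, and in the counterexample above $2$ is interior to the disc centred at $\phi$. Your explicit criterion is in fact the cleanest way to see that the extra hypothesis cannot be dropped; if you present this proof, state the restriction to the square explicitly.
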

\begin{proof}
1. It follows from direct computation.

2. It follows from direct computation.

3. $T_{n,\epsilon,\phi}$ is nonderogatory just like any Hessenberg matrix with nonzero subdiagonal entries \cite[p.~82]{HJ}. Since $T_{n,\epsilon,\phi}$ is also real and symmetric (hence diagonalizable), we infer that $T_{n,\epsilon,\phi}$ has $n$ real distinct eigenvalues.

4. The result follows immediately from Gershgorin's theorem \cite[Theorem~6.1.1]{HJ}.

5. We prove the statement in the case where $|\epsilon|\le1$ and $|\phi|>1$ (the proof in the other case is identical). Write
\[ T_{n,\epsilon,\phi}=T_{n,\epsilon,0}+\phi\,\ee_n\ee_n^\top. \]
All the eigenvalues of $T_{n,\epsilon,0}$ belong to $[-2,2]$ by Gershgorin's theorem. Since the unique nonzero eigenvalue of the matrix $\phi\,\ee_n\ee_n^\top$ is $\phi$, it follows from a classical interlacing theorem \cite[Corollary~4.3.3]{HJ} that $n-1$ eigenvalues of $T_{n,\epsilon,\phi}$ belong to $[-2,2]$.

6.  Write
\[ T_{n,\epsilon,\phi}=T_{n,0,0}+\epsilon\,\ee_1\ee_1^\top+\phi\,\ee_n\ee_n^\top. \]
All the eigenvalues of $T_{n,0,0}$ belong to $[-2,2]$ by Gershgorin's theorem. Since the unique nonzero eigenvalues of the matrix $\epsilon\,\ee_1\ee_1^\top+\phi\,\ee_n\ee_n^\top$ are $\epsilon$ and $\phi$, it follows from \cite[Corollary~4.3.3]{HJ} that $n-2$ eigenvalues of $T_{n,\epsilon,\phi}$ belong to $[-2,2]$.

7. The result follows immediately from the fact that the matrix $T_{n,\epsilon,\phi}$ is irreducible and from the so-called Gershgorin's third theorem \cite[p.~80]{Bini}.
\end{proof}

\section{Asymptotics of the Outliers of $\boldsymbol{T_{n,\epsilon,\phi}}$}\label{sec:aoto}

If $|\epsilon|>1$ and $n$ is large enough, property~\ref{vw} of Theorem~\ref{eig_props} says that $(\epsilon+\epsilon^{-1},\vv_n)$ is substantially an eigenpair of $T_{n,\epsilon,\phi}$ (it is an exact eigenpair if $\epsilon\phi=1$). A similar consideration applies to $(\phi+\phi^{-1},\ww_n)$. The next theorems formalize this intuition. We remark that, for every $x>0$,
\[ x+x^{-1}=2\cosh(\log x)\ge2, \]
with equality holding if and only if $x=1$. In what follows, $\Lambda(X)$ denotes the spectrum of the matrix $X$.

\begin{lemma}\label{eig_conv}
The following properties hold.
\begin{enumerate}[leftmargin=*,nolistsep]
	\item If $|\epsilon|>1$ then there exists an eigenvalue $\mu_n$ of $T_{n,\epsilon,\phi}$ such that $\mu_n\to\epsilon+\epsilon^{-1}$ as $n\to\infty$. Since $\epsilon+\epsilon^{-1}>2$, the eigenvalue $\mu_n$ is eventually an outlier.
	\item If $|\phi|>1$ then there exists an eigenvalue $\nu_n$ of $T_{n,\epsilon,\phi}$ such that $\nu_n\to\phi+\phi^{-1}$ as $n\to\infty$. Since $\phi+\phi^{-1}>2$, the eigenvalue $\nu_n$ is eventually an outlier.
\end{enumerate}
\end{lemma}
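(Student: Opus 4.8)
The plan is to upgrade the ``almost eigenpair'' provided by property~\ref{vw} of Theorem~\ref{eig_props} into a genuine convergence statement by means of a standard residual bound for symmetric matrices. Recall that if $A$ is real symmetric and $A\uu-\lambda\uu=\mathbf r$ for some $\uu\ne\mathbf 0$, then the spectrum of $A$ contains a point within distance $\|\mathbf r\|_2/\|\uu\|_2$ of $\lambda$: expanding $\uu=\sum_i c_i\qq_i$ in an orthonormal eigenbasis $\{\qq_i\}$ of $A$ with eigenvalues $\lambda_i$ gives $\mathbf r=\sum_i c_i(\lambda_i-\lambda)\qq_i$, whence $\|\mathbf r\|_2^2=\sum_i|c_i|^2(\lambda_i-\lambda)^2\ge\big(\min_i|\lambda_i-\lambda|\big)^2\|\uu\|_2^2$. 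I would apply this with $A=T_{n,\epsilon,\phi}$, $\lambda=\epsilon+\epsilon^{-1}$ and $\uu=\vv_n$, so that property~\ref{vw} supplies precisely the residual $\mathbf r=\epsilon^{-n}(\epsilon\phi-1)\ee_n$.

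First I would record the two relevant norms. The residual satisfies $\|\mathbf r\|_2=|\epsilon|^{-n}\,|\epsilon\phi-1|$, while
\[ \|\vv_n\|_2^2=\sum_{i=0}^{n-1}|\epsilon|^{-2i}\ge1, \]
the lower bound coming from the single term $i=0$ together with the positivity of all terms. Combining these with the residual bound yields
\[ \mathrm{dist}\big(\epsilon+\epsilon^{-1},\Lambda(T_{n,\epsilon,\phi})\big)\le\frac{\|\mathbf r\|_2}{\|\vv_n\|_2}\le|\epsilon|^{-n}\,|\epsilon\phi-1|. \]
Since $|\epsilon|>1$, the right-hand side tends to $0$ as $n\to\infty$. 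Taking $\mu_n$ to be the eigenvalue of $T_{n,\epsilon,\phi}$ nearest to $\epsilon+\epsilon^{-1}$ then gives $\mu_n\to\epsilon+\epsilon^{-1}$, and the remark preceding the lemma (namely $x+x^{-1}\ge2$ with equality only at $x=1$) guarantees $\epsilon+\epsilon^{-1}>2$, so $\mu_n$ is eventually an outlier.

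For the second statement I would run the identical argument with $\ww_n$ in place of $\vv_n$, $\phi$ in place of $\epsilon$, and the residual $\phi^{-n}(\epsilon\phi-1)\ee_1$ furnished by the second half of property~\ref{vw}; the norm estimates are the same mutatis mutandis. I do not expect a genuine obstacle here: the entire content is the residual bound plus two elementary norm computations. The only point requiring a little care is the lower bound $\|\vv_n\|_2\ge1$, which crucially uses $|\epsilon|>1$ (for $|\epsilon|<1$ the geometric sum diverges and the estimate would point the wrong way); this is exactly where the hypothesis enters and is what makes the factor $|\epsilon|^{-n}\to0$ decisive.
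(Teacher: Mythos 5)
Your proof is correct and is essentially the paper's own argument: the paper expands $\vv_n$ in an orthonormal eigenbasis of $T_{n,\epsilon,\phi}$ and derives precisely your residual inequality $\|\mathbf r\|_2^2\ge\bigl(\min_i|\lambda_{i,n}-(\epsilon+\epsilon^{-1})|\bigr)^2\|\vv_n\|_2^2$, only phrased as a proof by contradiction (and it handles item~2 via the flip symmetry $T_{n,\phi,\epsilon}=E_nT_{n,\epsilon,\phi}E_n$ rather than rerunning the computation). One small slip in your closing remark: the bound $\|\vv_n\|_2\ge1$ holds for every $\epsilon\ne0$ because the first entry of $\vv_n$ equals $1$; the hypothesis $|\epsilon|>1$ is needed only to make the residual norm $|\epsilon|^{-n}\,|\epsilon\phi-1|$ tend to zero.
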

\begin{proof}
1. Let $\{\uu_{1,n},\ldots,\uu_{n,n}\}$ be an orthonormal basis of $\mathbb R^n$ formed by eigenvectors of $T_{n,\epsilon,\phi}$ with corresponding eigenvalues $\lambda_{1,n},\ldots,\lambda_{n,n}$:
\[ T_{n,\epsilon,\phi}\uu_{i,n}=\lambda_{i,n}\uu_{i,n},\qquad i=1,\ldots,n. \]
Expand the vector $\vv_n=[1,\epsilon^{-1},\ldots,\epsilon^{-n+1}]^\top$ on this basis:
\begin{align}\label{0_eq}
\vv_n&=\sum_{i=1}^n\alpha_{i,n}\uu_{i,n},\\
\sum_{i=1}^n\alpha_{i,n}^2&=\|\vv_n\|_2^2=\frac{1-\epsilon^{-2n}}{1-\epsilon^{-2}}\to\frac1{1-\epsilon^{-2}}.\label{0_eq'}
\end{align}
The equation $T_{n,\epsilon,\phi}\vv_n-(\epsilon+\epsilon^{-1})\vv_n=\epsilon^{-n}(\epsilon\phi-1)\ee_n$ in Theorem~\ref{eig_props} becomes
\begin{equation}\label{a_eq}
\sum_{i=1}^n(\lambda_{i,n}-(\epsilon+\epsilon^{-1}))\alpha_{i,n}\uu_{i,n}=\epsilon^{-n}(\epsilon\phi-1)\ee_n.
\end{equation}
Passing to the norms, we obtain
\begin{equation}\label{b_eq}
\sum_{i=1}^n(\lambda_{i,n}-(\epsilon+\epsilon^{-1}))^2\alpha_{i,n}^2=\epsilon^{-2n}(\epsilon\phi-1)^2\to0.
\end{equation}
If we assume by contradiction that ${\rm dist}(\Lambda(T_{n,\epsilon,\phi}),\epsilon+\epsilon^{-1})=\min_{i=1,\ldots,n}|\lambda_{i,n}-(\epsilon+\epsilon^{-1})|\not\to0$ as $n\to\infty$, then there exists a positive constant $c$ such that 
\[ {\rm dist}(\Lambda(T_{n,\epsilon,\phi}),\epsilon+\epsilon^{-1})\ge c \]
frequently as $n\to\infty$, hence
\[ \sum_{i=1}^n(\lambda_{i,n}-(\epsilon+\epsilon^{-1}))^2\alpha_{i,n}^2\ge c^2\sum_{i=1}^n\alpha_{i,n}^2=c^2\|\vv_n\|_2^2\ge c^2 \]
frequently as $n\to\infty$, which is a contradiction to \eqref{b_eq}. We conclude that ${\rm dist}(\Lambda(T_{n,\epsilon,\phi}),\epsilon+\epsilon^{-1})\to0$ as $n\to\infty$, which is the thesis.

2. It follows from item~1 applied to $T_{n,\phi,\epsilon}$, taking into account that $\Lambda(T_{n,\phi,\epsilon})=\Lambda(T_{n,\epsilon,\phi})$ by Theorem~\ref{eig_props}.
\end{proof}

If $\xx,\yy\in\mathbb R^n$, we set $(\xx,\yy)=\xx^\top\yy$. If $\uu\in\mathbb R^n$, we denote by $P_{\uu}$ the orthogonal projector onto the subspace $\langle\uu\rangle$ generated by $\uu$. In the case where $\uu\ne\mathbf0$, the projector $P_\uu$ is explicitly given by
\[ P_\uu\xx=\frac{(\xx,\uu)}{(\uu,\uu)}\,\uu,\qquad\xx\in\mathbb R^n. \]

\begin{table}
\small
\centering
\caption{Validation of Theorem~\ref{t_eps} in the case $\epsilon=3$ and $\phi=1/2$ where $\epsilon+\epsilon^{-1}=3.\overline3$. For every $n$ we have denoted by $\mu_n$ the unique outlier of $T_{n,\epsilon,\phi}$ and by $\xx_n$ the corresponding normalized eigenvector computed by \textsc{Julia}.}
\label{T1}

\medskip

\begin{tabular}{rccclcl}
\toprule
$n$ && outlier $\mu_n$ && $|\mu_n-(\epsilon+\epsilon^{-1})|$ && $\|\xx_n-P_{\vv_n}\xx_n\|_2$\\
\midrule
8   && 3.3333333663723654 && $\hphantom{\mu_n}\,3.3\cdot10^{-8}$ && $\hphantom{\|\xx}\,3.0\cdot10^{-5}\vphantom{\int^{\Sigma^1}}$ \\
16  && 3.3333333333333341 && $\hphantom{\mu_n}\,7.7\cdot10^{-16}$ && $\hphantom{\|\xx}\,4.6\cdot10^{-9}\vphantom{\int^{\Sigma^1}}$ \\
32  && 3.3333333333333333 && $\hphantom{\mu_n}\,4.1\cdot10^{-31}$ && $\hphantom{\|\xx}\,1.1\cdot10^{-16}\vphantom{\int^{\Sigma^1}}$\\
64  && 3.3333333333333333 && $\hphantom{\mu_n}\,1.2\cdot10^{-61}$ && $\hphantom{\|\xx}\,5.8\cdot10^{-32}\vphantom{\int^{\Sigma^1}}$\\
128 && 3.3333333333333333 && $\hphantom{\mu_n}\,1.0\cdot10^{-122}$ && $\hphantom{\|\xx}\,1.7\cdot10^{-62}\vphantom{\int^{\Sigma^1}}$\\
\bottomrule
\end{tabular}
\end{table}

\begin{theorem}\label{t_eps}
Suppose that $|\epsilon|>1$ and $\phi\ne\epsilon$. Let $(\mu_n,\xx_n)$ be an eigenpair of $T_{n,\epsilon,\phi}$ such that $\mu_n\to\epsilon+\epsilon^{-1}$ as $n\to\infty$ and $\|\xx_n\|_2=1$ for all $n$.
Then, the following properties hold.
\begin{enumerate}[leftmargin=*,nolistsep]
	\item Eventually, $\mu_n$ is an outlier of $T_{n,\epsilon,\phi}$ and any other eigenvalue $\lambda_n\in\Lambda(T_{n,\epsilon,\phi})$ satisfies $|\lambda_n-(\epsilon+\epsilon^{-1})|\ge c$ for some positive constant $c$ independent of $n$.
	\item $\|\xx_n-P_{\vv_n}\xx_n\|_2\to0$ as $n\to\infty$, where $\vv_n=[1,\epsilon^{-1},\ldots,\epsilon^{-n+1}]^\top$.
\end{enumerate}
\end{theorem}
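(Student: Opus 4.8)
The plan is to establish part~1 first, since part~2 will rest on the spectral gap it provides. That $\mu_n$ is eventually an outlier is immediate: $\mu_n\to\epsilon+\epsilon^{-1}>2$ forces $\mu_n>2$ for $n$ large. For the gap I would split according to $|\phi|$. If $|\phi|\le1$, property~5 of Theorem~\ref{eig_props} says $T_{n,\epsilon,\phi}$ has at most one outlier, which must be $\mu_n$; hence every other eigenvalue lies in $[-2,2]$ and is at distance at least $(\epsilon+\epsilon^{-1})-2>0$ from $\epsilon+\epsilon^{-1}$. If $|\phi|>1$, property~6 permits a second outlier, and by Lemma~\ref{eig_conv} there is indeed an eigenvalue $\nu_n\to\phi+\phi^{-1}$.

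The crux of part~1, and what I expect to be the main obstacle, is ruling out that this second outlier collides with $\mu_n$ in the limit. Here I would invoke the elementary fact that, for real $x,y$, the equation $x+x^{-1}=y+y^{-1}$ forces $y=x$ or $xy=1$ (multiply through by $xy$ to get $(x-y)(xy-1)=0$). Since $\phi\ne\epsilon$ by hypothesis and $\phi\epsilon=1$ is impossible when $|\epsilon|,|\phi|>1$, I conclude $\phi+\phi^{-1}\ne\epsilon+\epsilon^{-1}$, so eventually $|\nu_n-(\epsilon+\epsilon^{-1})|\ge\tfrac12|\phi+\phi^{-1}-(\epsilon+\epsilon^{-1})|>0$. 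As property~6 caps the number of outliers at two, eventually these are exactly $\mu_n$ and $\nu_n$; hence every eigenvalue other than $\mu_n$ is either in $[-2,2]$ or equal to $\nu_n$, and in both regimes it stays a fixed positive distance $c$ from $\epsilon+\epsilon^{-1}$.

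For part~2 I would reuse the eigenexpansion from the proof of Lemma~\ref{eig_conv}. Take an orthonormal eigenbasis $\uu_{1,n},\ldots,\uu_{n,n}$ with $\uu_{1,n}=\xx_n$ and $\lambda_{1,n}=\mu_n$, and write $\vv_n=\sum_i\alpha_{i,n}\uu_{i,n}$; then $\alpha_{1,n}=(\xx_n,\vv_n)$ and $\sum_i\alpha_{i,n}^2=\|\vv_n\|_2^2$. Equation~\eqref{b_eq} gives $\sum_i(\lambda_{i,n}-(\epsilon+\epsilon^{-1}))^2\alpha_{i,n}^2\to0$, and feeding in the gap $|\lambda_{i,n}-(\epsilon+\epsilon^{-1})|\ge c$ for $i\ne1$ from part~1 yields $c^2\sum_{i\ne1}\alpha_{i,n}^2\to0$, i.e. $\sum_{i\ne1}\alpha_{i,n}^2\to0$. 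Since $\|\vv_n\|_2^2\to(1-\epsilon^{-2})^{-1}>0$ by~\eqref{0_eq'}, it follows that $\alpha_{1,n}^2=\|\vv_n\|_2^2-\sum_{i\ne1}\alpha_{i,n}^2\to(1-\epsilon^{-2})^{-1}$ as well.

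Finally I would convert this into the projection statement via the Pythagorean identity $\|\xx_n-P_{\vv_n}\xx_n\|_2^2=1-(\xx_n,\vv_n)^2/\|\vv_n\|_2^2=1-\alpha_{1,n}^2/\|\vv_n\|_2^2$. The two limits just computed give $\alpha_{1,n}^2/\|\vv_n\|_2^2\to1$, whence $\|\xx_n-P_{\vv_n}\xx_n\|_2\to0$, as claimed. The only points needing care are that the eigenvector of the simple eigenvalue $\mu_n$ (simple by property~\ref{nondeg} of Theorem~\ref{eig_props}) is determined up to sign, so identifying $\uu_{1,n}$ with $\xx_n$ is legitimate and the sign is irrelevant to the projection, and that the gap from part~1 need only hold for all sufficiently large $n$, which is all the limits require.
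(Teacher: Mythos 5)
Your proposal is correct and follows essentially the same route as the paper: part~1 via the outlier counts in Theorem~\ref{eig_props} together with Lemma~\ref{eig_conv}, and part~2 via the orthonormal eigenexpansion of $\vv_n$ and the residual bound \eqref{b_eq}. The only differences are cosmetic refinements: you justify $\phi+\phi^{-1}\ne\epsilon+\epsilon^{-1}$ explicitly through $(x-y)(xy-1)=0$ (the paper infers it from $\phi\ne\epsilon$ alone, which indeed also needs $\epsilon\phi\ne1$, automatic when $|\epsilon|,|\phi|>1$), and you close part~2 with the Pythagorean identity rather than the expanded computation in \eqref{whathold}.
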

\begin{proof}
1. If $|\phi|\le1$ then all eigenvalues of $T_{n,\epsilon,\phi}$ belong to $[-2,2]$ except for at most 1 outlier (by Theorem~\ref{eig_props}). Since $\mu_n\to\epsilon+\epsilon^{-1}\not\in[-2,2]$, it is clear that $\mu_n$ coincides eventually with the unique outlier of $T_{n,\epsilon,\phi}$.
Moreover, any other eigenvalue $\lambda_n$ of $T_{n,\epsilon,\phi}$ satisfies the inequality $|\lambda_n-(\epsilon+\epsilon^{-1})|\ge c$ with 
\[ c={\rm dist}(\epsilon+\epsilon^{-1},[-2,2]). \] 
If $|\phi|>1$ then all eigenvalues of $T_{n,\epsilon,\phi}$ belong to $[-2,2]$ except for at most 2 outliers (by Theorem~\ref{eig_props}) and there exists an eigenvalue $\nu_n$ of $T_{n,\epsilon,\phi}$ such that $\nu_n\to\phi+\phi^{-1}\not\in[-2,2]$ (by Lemma~\ref{eig_conv}). Since $\mu_n\to\epsilon+\epsilon^{-1}\not\in[-2,2]$ and $\epsilon+\epsilon^{-1}\ne\phi+\phi^{-1}$ (because $\phi\ne\epsilon$ by assumption), it is clear that, eventually, $\mu_n\ne\nu_n$ and $\mu_n,\nu_n$ are the unique two outliers of $T_{n,\epsilon,\phi}$. Moreover, any eigenvalue $\lambda_n$ of $T_{n,\epsilon,\phi}$ with $\lambda_n\ne\mu_n$ satisfies eventually the inequality $|\lambda_n-(\epsilon+\epsilon^{-1})|\ge c$ with 
\[ c={\rm dist}(\epsilon+\epsilon^{-1},[-2,2]\cup[\phi+\phi^{-1}-\delta,\,\phi+\phi^{-1}+\delta]), \]
where $\delta$ is a fixed positive constant chosen so that $\epsilon+\epsilon^{-1}\not\in[\phi+\phi^{-1}-\delta,\,\phi+\phi^{-1}+\delta]$.

2. Let $\{\uu_{1,n},\ldots,\uu_{n,n}=\xx_n\}$ be an orthonormal basis of $\mathbb R^n$ formed by eigenvectors of $T_{n,\epsilon,\phi}$ with corresponding eigenvalues $\lambda_{1,n},\ldots,\lambda_{n,n}=\mu_n$:
\[ T_{n,\epsilon,\phi}\uu_{i,n}=\lambda_{i,n}\uu_{i,n},\qquad i=1,\ldots,n. \]
We expand the vector $\vv_n$ on this basis as in \eqref{0_eq} and we get \eqref{0_eq'}--\eqref{b_eq}. 
By item~1, we eventually have
\begin{equation}\label{again0}
\sum_{i=1}^n(\lambda_{i,n}-(\epsilon+\epsilon^{-1}))^2\alpha_{i,n}^2\ge c^2\sum_{i=1}^{n-1}\alpha_{i,n}^2+(\mu_n-(\epsilon+\epsilon^{-1}))^2\alpha_{n,n}^2.
\end{equation}
Hence, by \eqref{0_eq'} and \eqref{b_eq},
\begin{equation}
\sum_{i=1}^{n-1}\alpha_{i,n}^2\to0,\qquad\alpha_{n,n}^2\to\frac1{1-\epsilon^{-2}}.\label{1..n}
\end{equation}
Keeping in mind \eqref{0_eq}, \eqref{0_eq'} and \eqref{1..n}, we obtain
\begin{align}
\|\xx_n-P_{\vv_n}\xx_n\|_2^2&=\|\uu_{n,n}-P_{\vv_n}\uu_{n,n}\|_2^2=\left\|\uu_{n,n}-\frac{(\uu_{n,n},\vv_n)}{(\vv_n,\vv_n)}\,\vv_n\right\|_2^2=\left\|\uu_{n,n}-\frac{\alpha_{n,n}}{\|\vv_n\|_2^2}\sum_{i=1}^{n}\alpha_{i,n}\uu_{i,n}\right\|_2^2\notag\\
&=\left\|\biggl(1-\frac{\alpha_{n,n}^2}{\|\vv_n\|_2^2}\biggr)\uu_{n,n}+\frac{\alpha_{n,n}}{\|\vv_n\|_2^2}\sum_{i=1}^{n-1}\alpha_{i,n}\uu_{i,n}\right\|_2^2=\biggl(1-\frac{\alpha_{n,n}^2}{\|\vv_n\|_2^2}\biggr)^2+\frac{\alpha_{n,n}^2}{\|\vv_n\|_2^4}\sum_{i=1}^{n-1}\alpha_{i,n}^2\to0,\label{whathold}
\end{align}
which concludes the proof.
\end{proof}

The next theorem is completely analogous to Theorem~\ref{t_eps} and can be proved by the same type of argument or by using the relation between $T_{n,\epsilon,\phi}$ and $T_{n,\phi,\epsilon}$ (see Theorem~\ref{eig_props}).

\begin{theorem}\label{t_phi}
Suppose that $|\phi|>1$ and $\epsilon\ne\phi$. Let $(\nu_n,\yy_n)$ be an eigenpair of $T_{n,\epsilon,\phi}$ such that $\nu_n\to\phi+\phi^{-1}$ as $n\to\infty$ and $\|\yy_n\|_2=1$ for all $n$.
Then, the following properties hold.
\begin{enumerate}[leftmargin=*,nolistsep]
	\item Eventually, $\nu_n$ is an outlier of $T_{n,\epsilon,\phi}$ and any other eigenvalue $\lambda_n\in\Lambda(T_{n,\epsilon,\phi})$ satisfies $|\lambda_n-(\phi+\phi^{-1})|\ge c$ for some positive constant $c$ independent of $n$.
	\item $\|\yy_n-P_{\ww_n}\yy_n\|_2\to0$ as $n\to\infty$, where $\ww_n=[\phi^{-n+1},\ldots,\phi^{-1},1]^\top$.
\end{enumerate}
\end{theorem}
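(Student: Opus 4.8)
The plan is to avoid repeating the argument of Theorem~\ref{t_eps} and instead to deduce everything from the flip symmetry recorded in property~1 of Theorem~\ref{eig_props}, namely $T_{n,\phi,\epsilon}=E_nT_{n,\epsilon,\phi}E_n$. The point is that Theorem~\ref{t_phi} is exactly Theorem~\ref{t_eps} with the roles of the two boundary parameters interchanged, so I would apply Theorem~\ref{t_eps} to the matrix $T_{n,\phi,\epsilon}$, whose top-left entry is $\phi$ and bottom-right entry is $\epsilon$. The hypotheses match: Theorem~\ref{t_eps} requires the top-left parameter to exceed $1$ in modulus and to differ from the bottom-right one, which here reads $|\phi|>1$ and $\epsilon\ne\phi$, precisely the assumptions of Theorem~\ref{t_phi}.

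First I would dispose of part~1. Since $\Lambda(T_{n,\phi,\epsilon})=\Lambda(T_{n,\epsilon,\phi})$ (again by property~1 of Theorem~\ref{eig_props}), the eigenvalue $\nu_n\to\phi+\phi^{-1}$ of $T_{n,\epsilon,\phi}$ is also an eigenvalue of $T_{n,\phi,\epsilon}$, and it plays the role that $\mu_n$ had in Theorem~\ref{t_eps} for the matrix $T_{n,\phi,\epsilon}$, whose limiting value there is the top-left parameter plus its reciprocal, namely $\phi+\phi^{-1}$. Part~1 of Theorem~\ref{t_eps} then states that, eventually, $\nu_n$ is an outlier and every other eigenvalue stays at distance at least $c$ from $\phi+\phi^{-1}$; since the two matrices share their spectra, this is exactly part~1 of Theorem~\ref{t_phi}.

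For part~2 I would use that if $(\nu_n,\yy_n)$ is an eigenpair of $T_{n,\epsilon,\phi}$ then $(\nu_n,E_n\yy_n)$ is an eigenpair of $T_{n,\phi,\epsilon}$, with $\|E_n\yy_n\|_2=\|\yy_n\|_2=1$ because $E_n$ is orthogonal. Applying part~2 of Theorem~\ref{t_eps} to $T_{n,\phi,\epsilon}$ with the unit eigenvector $E_n\yy_n$ yields $\|E_n\yy_n-P_{\uu_n}(E_n\yy_n)\|_2\to0$, where $\uu_n=[1,\phi^{-1},\ldots,\phi^{-n+1}]^\top$ is the vector that plays the role of $\vv_n$ for $T_{n,\phi,\epsilon}$ (built from its top-left parameter $\phi$). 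The last step is to transport this back through $E_n$: one checks directly that $E_n\uu_n=[\phi^{-n+1},\ldots,\phi^{-1},1]^\top=\ww_n$, and that for any orthogonal matrix the projector conjugates as $P_{E_n\uu}E_n=E_nP_{\uu}$ (immediate from $P_{E_n\uu}(E_n\xx)=\frac{(\xx,\uu)}{(\uu,\uu)}E_n\uu=E_nP_\uu\xx$). Combining these with the isometry of $E_n$ gives
\[ \|\yy_n-P_{\ww_n}\yy_n\|_2=\|E_n(\yy_n-P_{\ww_n}\yy_n)\|_2=\|E_n\yy_n-P_{E_n\ww_n}(E_n\yy_n)\|_2=\|E_n\yy_n-P_{\uu_n}(E_n\yy_n)\|_2\to0, \]
since $E_n\ww_n=\uu_n$, which is the claim.

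I do not expect a genuine obstacle: the whole proof is bookkeeping once the symmetry is exploited. The one place to be careful is the role-swap when invoking Theorem~\ref{t_eps} — making sure that the parameter playing the role of $\epsilon$ for $T_{n,\phi,\epsilon}$ is $\phi$, that its companion vector becomes $\ww_n$ after the flip, and that the orthogonal conjugation of projectors is applied in the correct direction. As an alternative I could instead mirror the proof of Theorem~\ref{t_eps} line by line, expanding $\ww_n$ in an eigenbasis and using the identity $T_{n,\epsilon,\phi}\ww_n-(\phi+\phi^{-1})\ww_n=\phi^{-n}(\epsilon\phi-1)\ee_1$ together with $\|\ww_n\|_2^2=(1-\phi^{-2n})/(1-\phi^{-2})$ from property~\ref{vw} of Theorem~\ref{eig_props}; this is equally routine but longer, so the symmetry route is preferable.
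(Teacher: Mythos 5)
Your proposal is correct and follows exactly one of the two routes the paper itself indicates, namely deducing Theorem~\ref{t_phi} from Theorem~\ref{t_eps} via the flip relation $T_{n,\phi,\epsilon}=E_nT_{n,\epsilon,\phi}E_n$ of Theorem~\ref{eig_props}; the paper leaves the details to the reader, and your bookkeeping (the role swap $\epsilon\leftrightarrow\phi$, the identity $E_n\ww_n=\uu_n$, and the conjugation $P_{E_n\uu}E_n=E_nP_\uu$) fills them in correctly.
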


\begin{table}
\small
\centering
\caption{Validation of Theorems~\ref{t_eps} and~\ref{t_phi} in the case $\epsilon=4$ and $\phi=-2$ where $\epsilon+\epsilon^{-1}=4.25$ and $\phi+\phi^{-1}=-2.5$. For every $n$ we have denoted by $\mu_n,\nu_n$ the unique two outliers of $T_{n,\epsilon,\phi}$ and by $\xx_n,\yy_n$ the corresponding normalized eigenvectors computed by \textsc{Julia}. We have called $\mu_n$ the outlier closest to $\epsilon+\epsilon^{-1}$ and $\nu_n$ the other outlier.}
\label{T2}

\medskip

\begin{tabular}{rccclcl}
\toprule
$n$ && outlier $\mu_n$ && $\hspace{1.2pt}|\mu_n-(\epsilon+\epsilon^{-1})|$ && $\hspace{0.5pt}\|\xx_n-P_{\vv_n}\xx_n\|_2$\\[1pt]
&& outlier $\nu_n$ && $|\nu_n-(\phi+\phi^{-1})|$ && $\|\yy_n-P_{\ww_n}\yy_n\|_2$\\
\midrule
8   && $\hphantom{-}4.2499999950887285$ && $\hphantom{|\mu_n}\,4.9\cdot10^{-9}$ && $\hphantom{\|\xx}\,2.3\cdot10^{-5}\vphantom{\int^{\Sigma^1}}$ \\
    && $-2.4999484772090417$ && $\hphantom{|\mu_n}\,5.2\cdot10^{-5}$ && $\hphantom{\|\xx}\,5.9\cdot10^{-3}$\\[3pt]
16  && $\hphantom{-}4.2500000000000000$ && $\hphantom{|\mu_n}\,1.1\cdot10^{-18}$ && $\hphantom{\|\xx}\,3.5\cdot10^{-10}\vphantom{\int^{\Sigma^1}}$\\
    && $-2.4999999992141966$ && $\hphantom{|\mu_n}\,7.9\cdot10^{-10}$  && $\hphantom{\|\xx}\,2.3\cdot10^{-5}$\\[3pt]
32  && $\hphantom{-}4.2500000000000000$    && $\hphantom{|\mu_n}\,6.2\cdot10^{-38}$  && $\hphantom{\|\xx}\,8.1\cdot10^{-20}\vphantom{\int^{\Sigma^1}}$ \\
    && $-2.5000000000000000$ && $\hphantom{|\mu_n}\,1.8\cdot10^{-19}$  && $\hphantom{\|\xx}\,3.5\cdot10^{-10}$\\[3pt]
64  && $\hphantom{-}4.2500000000000000$    && $\hphantom{|\mu_n}\,1.8\cdot10^{-76}$  && $\hphantom{\|\xx}\,4.4\cdot10^{-39}\vphantom{\int^{\Sigma^1}}$ \\
    && $-2.5000000000000000$ && $\hphantom{|\mu_n}\,9.9\cdot10^{-39}$  && $\hphantom{\|\xx}\,8.1\cdot10^{-20}$\\[3pt]
128 && $\hphantom{-}4.2500000000000000$    && $\hphantom{|\mu_n}\,1.6\cdot10^{-153}$ && $\hphantom{\|\xx}\,1.3\cdot10^{-77}\vphantom{\int^{\Sigma^1}}$ \\
    && $-2.5000000000000000$ && $\hphantom{|\mu_n}\,2.9\cdot10^{-77}$  && $\hphantom{\|\xx}\,4.4\cdot10^{-39}$\\
\bottomrule
\end{tabular}
\end{table}

To conclude our analysis, we address the case where $|\epsilon|,|\phi|>1$ and $\epsilon=\phi$.

\begin{theorem}\label{t_e=p}
Suppose that $|\epsilon|,|\phi|>1$ and $\epsilon=\phi$. Then, the following properties hold.
\begin{enumerate}[leftmargin=*,nolistsep]
	\item There exist exactly two distinct eigenvalues $\mu_n,\nu_n$ of $T_{n,\epsilon,\phi}$ which are eventually the unique two outliers of $T_{n,\epsilon,\phi}$ and satisfy $\mu_n,\nu_n\to\epsilon+\epsilon^{-1}=\phi+\phi^{-1}$.
	\item Let $\xx_n$ and $\yy_n$ be eigenvectors of $T_{n,\epsilon,\phi}$ associated with $\mu_n$ and $\nu_n$, respectively, and satisfying $\|\xx_n\|_2=\|\yy_n\|_2=1$ for all $n$. Then, up to a renaming of $\mu_n$ and $\nu_n$, we eventually have $E_n\xx_n=\xx_n$ and $E_n\yy_n=-\yy_n$. Moreover, $\|\xx_n-P_{\vv_n+\ww_n}\xx_n\|_2\to0$ and $\|\yy_n-P_{\vv_n-\ww_n}\yy_n\|_2\to0$ as $n\to\infty$, where $\vv_n=[1,\epsilon^{-1},\ldots,\epsilon^{-n+1}]^\top$ and $\ww_n=[\phi^{-n+1},\ldots,\phi^{-1},1]^\top=E_n\vv_n$.
\end{enumerate}
\end{theorem}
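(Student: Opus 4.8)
The plan is to exploit the reflection symmetry forced by $\epsilon=\phi$. Since $\epsilon=\phi$, Theorem~\ref{eig_props}(1) gives $T_{n,\epsilon,\phi}=E_nT_{n,\epsilon,\phi}E_n$, so $T_{n,\epsilon,\phi}$ commutes with the involution $E_n$ and $\mathbb R^n$ splits as the orthogonal direct sum $S_n\oplus A_n$ of the $(+1)$- and $(-1)$-eigenspaces of $E_n$, each invariant under $T_{n,\epsilon,\phi}$. Accordingly I may pick an orthonormal eigenbasis of $T_{n,\epsilon,\phi}$ in which every eigenvector lies in $S_n$ or in $A_n$. Recalling that $\ww_n=E_n\vv_n$, I set $\sigma_n=\vv_n+\ww_n\in S_n$ and $\delta_n=\vv_n-\ww_n\in A_n$.

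First I would produce one outlier in each symmetry sector. Setting $\epsilon=\phi$ in Theorem~\ref{eig_props}(2) and adding/subtracting the two resulting identities gives
\[ T_{n,\epsilon,\phi}\sigma_n-(\epsilon+\epsilon^{-1})\sigma_n=\epsilon^{-n}(\epsilon^2-1)(\ee_1+\ee_n),\qquad T_{n,\epsilon,\phi}\delta_n-(\epsilon+\epsilon^{-1})\delta_n=\epsilon^{-n}(\epsilon^2-1)(\ee_n-\ee_1), \]
whose right-hand sides have norm $\sqrt2\,|\epsilon|^{-n}|\epsilon^2-1|\to0$. Moreover $(\vv_n,\ww_n)=n\epsilon^{1-n}\to0$, so $\|\sigma_n\|_2^2,\|\delta_n\|_2^2\to2/(1-\epsilon^{-2})>0$. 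Running the contradiction argument of Lemma~\ref{eig_conv} separately inside $S_n$ (with the approximate eigenvector $\sigma_n$) and inside $A_n$ (with $\delta_n$), where $T_{n,\epsilon,\phi}$ acts as a symmetric operator, then yields a symmetric eigenvalue $\mu_n$ and an antisymmetric eigenvalue $\nu_n$, both tending to $\epsilon+\epsilon^{-1}$.

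The crux — and the step I expect to be the main obstacle — is upgrading these to two \emph{distinct} outliers, since Lemma~\ref{eig_conv} and Theorem~\ref{eig_props}(6) alone leave open the possibility of a single outlier. Here the symmetry is decisive: the eigenvectors of $\mu_n$ and $\nu_n$ lie in the mutually orthogonal subspaces $S_n$ and $A_n$, so if $\mu_n=\nu_n$ the corresponding eigenspace would be at least two-dimensional, contradicting the simplicity of the spectrum in Theorem~\ref{eig_props}(3). Hence $\mu_n\ne\nu_n$ eventually. Since Theorem~\ref{eig_props}(6) allows at most two outliers and both $\mu_n,\nu_n\to\epsilon+\epsilon^{-1}\notin[-2,2]$, these are eventually the unique two outliers, proving item~1. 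For the eigenvectors, simplicity makes each eigenspace one-dimensional and $E_n$-invariant, forcing $E_n\xx_n=\xx_n$ (as $\xx_n\in S_n$) and $E_n\yy_n=-\yy_n$ after the evident renaming.

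Finally I would establish the projection limits by transplanting the argument of Theorem~\ref{t_eps}(2) into each sector. Working inside $S_n$, expand $\sigma_n=\sum_i\alpha_{i,n}\uu_{i,n}$ in an orthonormal basis of symmetric eigenvectors with $\uu_{n,n}=\xx_n$ and eigenvalue $\mu_n$; the residual identity above gives $\sum_i(\lambda_{i,n}-(\epsilon+\epsilon^{-1}))^2\alpha_{i,n}^2\to0$. The required spectral gap is available because every symmetric eigenvalue other than $\mu_n$ lies in $[-2,2]$ (the second outlier $\nu_n$ being antisymmetric), so $\mu_n$ is separated from the rest by ${\rm dist}(\epsilon+\epsilon^{-1},[-2,2])>0$. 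Exactly as in \eqref{again0}--\eqref{whathold} this forces $\sum_{i<n}\alpha_{i,n}^2\to0$ and $\alpha_{n,n}^2\to2/(1-\epsilon^{-2})$, whence $\|\xx_n-P_{\sigma_n}\xx_n\|_2\to0$, i.e. $\|\xx_n-P_{\vv_n+\ww_n}\xx_n\|_2\to0$. The identical computation in $A_n$ with $\delta_n$ yields $\|\yy_n-P_{\vv_n-\ww_n}\yy_n\|_2\to0$, completing item~2.
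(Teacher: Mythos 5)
Your proof is correct, and for the key step (item~1) it takes a genuinely different route from the paper's. The paper proves the existence of two distinct eigenvalues converging to $\epsilon+\epsilon^{-1}$ by contradiction: assuming a single such eigenvalue $\mu_n$, it runs the projection argument of Theorem~\ref{t_eps}(2) to get $\|\xx_n-P_{\vv_n}\xx_n\|_2\to0$ and then shows this is incompatible with $E_n\xx_n=\pm\xx_n$ by comparing the first and last entries of $\vv_n$ (which are $1$ and $\epsilon^{-n+1}\to0$). You instead argue directly: since $T_{n,\epsilon,\phi}$ commutes with $E_n$, you split $\mathbb R^n$ into the symmetric and antisymmetric sectors, place the approximate eigenvectors $\vv_n+\ww_n$ and $\vv_n-\ww_n$ one in each sector, run the Lemma~\ref{eig_conv} argument separately in each, and get distinctness for free from the simplicity of the spectrum (a common eigenvalue would have orthogonal eigenvectors in the two sectors, hence multiplicity two). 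This sidesteps the paper's contradiction argument entirely and also makes item~2 cleaner: the assignment of $\xx_n$ to the symmetric sector and $\yy_n$ to the antisymmetric one is built into the construction, whereas the paper has to extract it a posteriori from the expansions \eqref{0_eq+}--\eqref{0_eq++} via a separate frequently/eventually case analysis on the coefficients $\rho_{i,n},\tau_{i,n}$; likewise your sector-restricted spectral gap (all symmetric eigenvalues other than $\mu_n$ eventually lie in $[-2,2]$ because the other outlier $\nu_n$ is antisymmetric) replaces the paper's use of \eqref{1..n+}--\eqref{tau0}. The two proofs use the same raw ingredients---the residual identities of Theorem~\ref{eig_props}(\ref{vw}), simplicity of the eigenvalues, the bound of at most two outliers, and the projection computation \eqref{whathold}---but your sector-by-sector organization is arguably more transparent; the paper's version has the minor advantage of not requiring one to speak of restrictions of $T_{n,\epsilon,\phi}$ to invariant subspaces.
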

\begin{proof}
1. We first recall that all eigenvalues of $T_{n,\epsilon,\phi}$ are distinct by Theorem~\ref{eig_props}. Also, an eigenvalue converging to $\epsilon+\epsilon^{-1}$ exists for sure by Lemma~\ref{eig_conv} and more than two eigenvalues converging to $\epsilon+\epsilon^{-1}$ cannot exist by Theorem~\ref{eig_props} as $\epsilon+\epsilon^{-1}\not\in[-2,2]$. Suppose by contradiction that there exists a unique eigenvalue $\mu_n$ converging to $\epsilon+\epsilon^{-1}$ and let $\xx_n$ be a corresponding eigenvector with $\|\xx_n\|_2=1$.
Let $\{\uu_{1,n},\ldots,\uu_{n,n}=\xx_n\}$ be an orthonormal basis of $\mathbb R^n$ formed by eigenvectors of $T_{n,\epsilon,\phi}$ with corresponding eigenvalues $\lambda_{1,n},\ldots,\lambda_{n,n}=\mu_n$:
\[ T_{n,\epsilon,\phi}\uu_{i,n}=\lambda_{i,n}\uu_{i,n},\qquad i=1,\ldots,n. \]
We expand the vector $\vv_n$ on this basis as in \eqref{0_eq} and we get \eqref{0_eq'}--\eqref{b_eq}. Since $\mu_n$ is the unique eigenvalue of $T_{n,\epsilon,\phi}$ converging to $\epsilon+\epsilon^{-1}\notin[-2,2]$ and $n-2$ eigenvalues of $T_{n,\epsilon,\phi}$ belong to $[-2,2]$ for all $n$, there exists a positive constant $c$ independent of $n$ such that
\begin{equation}\label{again}
\sum_{i=1}^n(\lambda_{i,n}-(\epsilon+\epsilon^{-1}))^2\alpha_{i,n}^2\ge c^2\sum_{i=1}^{n-1}\alpha_{i,n}^2+(\mu_n-(\epsilon+\epsilon^{-1}))^2\alpha_{n,n}^2
\end{equation}
frequently as $n\to\infty$. Passing to a subsequence of indices $n$, if necessary, we may assume that \eqref{again} is satisfied for all $n$. Note that \eqref{again} is the same as \eqref{again0}. Hence, by reasoning as before, we infer that \eqref{1..n}--\eqref{whathold} hold and we conclude that $\|\xx_n-P_{\vv_n}\xx_n\|_2\to0$ (for the considered subsequence of indices $n$). This is impossible for the following reasons.
\begin{itemize}[leftmargin=*,nolistsep]
	\item Since $\epsilon=\phi$, we have $T_{n,\epsilon,\phi}=T_{n,\phi,\epsilon}$ and, by Theorem~\ref{eig_props}, $(\lambda,\uu)$ is an eigenpair of $T_{n,\epsilon,\phi}$ if and only if the same is true for $(\lambda,E_n\uu)$.
	\item By Theorem~\ref{eig_props}, each eigenvalue $\lambda$ of $T_{n,\epsilon,\phi}$ is simple and so $E_n\uu=\pm\uu$ for all eigenvectors $\uu$ of $T_{n,\epsilon,\phi}$. In particular $E_n\xx_n=\pm\xx_n$ for all $n$.
	\item If $\|\xx_n-P_{\vv_n}\xx_n\|_2\to0$ then the relation $E_n\xx_n=\pm\xx_n$ cannot hold for all $n$. Indeed, considering that $P_{\vv_n}\xx_n=c_n\vv_n$ is a multiple of $\vv_n$, from $\|\xx_n-P_{\vv_n}\xx_n\|_2\to0$ and $\|\xx_n\|_2=1$ we deduce that $\|P_{\vv_n}\xx_n\|_2=|c_n|\,\|\vv_n\|_2\to1$, i.e., $c_n\to1-\epsilon^{-2}$ (see \eqref{0_eq'}), and 
	\begin{align*}
	|(\xx_n)_1-(P_{\vv_n}\xx_n)_1|&=|(\xx_n)_1-c_n|\to0,\\
	|(\xx_n)_n-(P_{\vv_n}\xx_n)_n|&=|(\xx_n)_n-c_n\epsilon^{-n+1}|\to0,
	\end{align*}
	which are clearly incompatible with $E_n\xx_n=\pm\xx_n$ as the latter implies $(\xx_n)_n=\pm(\xx_n)_1$.
\end{itemize}

2. Let $\{\uu_{1,n},\ldots,\uu_{n-1,n}=\xx_n,\uu_{n,n}=\yy_n\}$ be an orthonormal basis of $\mathbb R^n$ formed by eigenvectors of $T_{n,\epsilon,\phi}$ with corresponding eigenvalues $\lambda_{1,n},\ldots,\lambda_{n-1,n}=\mu_n,\lambda_{n,n}=\nu_n$:
\[ T_{n,\epsilon,\phi}\uu_{i,n}=\lambda_{i,n}\uu_{i,n},\qquad i=1,\ldots,n. \]
Expand the vectors $\vv_n+\ww_n$ and $\vv_n-\ww_n$ on this basis:
\begin{align}\label{0_eq+}
\vv_n+\ww_n&=\sum_{i=1}^n\rho_{i,n}\uu_{i,n},\\
\vv_n-\ww_n&=\sum_{i=1}^n\tau_{i,n}\uu_{i,n},\label{0_eq++}\\
\sum_{i=1}^n\rho_{i,n}^2&=\|\vv_n+\ww_n\|_2^2=2\,\frac{1-\epsilon^{-2n}}{1-\epsilon^{-2}}+2n\epsilon^{-n+1}\to\frac2{1-\epsilon^{-2}},\label{0_eq'+}\\
\sum_{i=1}^n\tau_{i,n}^2&=\|\vv_n-\ww_n\|_2^2=2\,\frac{1-\epsilon^{-2n}}{1-\epsilon^{-2}}-2n\epsilon^{-n+1}\to\frac2{1-\epsilon^{-2}}.\label{0_eq'++}
\end{align}
Keeping in mind that $\epsilon=\phi$, the equations 
\begin{align*}
T_{n,\epsilon,\phi}\vv_n-(\epsilon+\epsilon^{-1})\vv_n&=\epsilon^{-n}(\epsilon\phi-1)\ee_n,\\
T_{n,\epsilon,\phi}\ww_n-(\phi+\phi^{-1})\ww_n&=\phi^{-n}(\epsilon\phi-1)\ee_1
\end{align*}
in Theorem~\ref{eig_props} yield
\begin{align*}
T_{n,\epsilon,\phi}(\vv_n+\ww_n)-(\epsilon+\epsilon^{-1})(\vv_n+\ww_n)&=\epsilon^{-n}(\epsilon\phi-1)(\ee_n+\ee_1),\\
T_{n,\epsilon,\phi}(\vv_n-\ww_n)-(\epsilon+\epsilon^{-1})(\vv_n-\ww_n)&=\epsilon^{-n}(\epsilon\phi-1)(\ee_n-\ee_1),
\end{align*}
that is,
\begin{align*}
\sum_{i=1}^n(\lambda_{i,n}-(\epsilon+\epsilon^{-1}))\rho_{i,n}\uu_{i,n}=\epsilon^{-n}(\epsilon\phi-1)(\ee_n+\ee_1),\\
\sum_{i=1}^n(\lambda_{i,n}-(\epsilon+\epsilon^{-1}))\tau_{i,n}\uu_{i,n}=\epsilon^{-n}(\epsilon\phi-1)(\ee_n-\ee_1).
\end{align*}
Passing to the norms, we obtain
\begin{align}\label{b_eq+}
\sum_{i=1}^n(\lambda_{i,n}-(\epsilon+\epsilon^{-1}))^2\rho_{i,n}^2=2\epsilon^{-2n}(\epsilon\phi-1)^2\to0,\\
\sum_{i=1}^n(\lambda_{i,n}-(\epsilon+\epsilon^{-1}))^2\tau_{i,n}^2=2\epsilon^{-2n}(\epsilon\phi-1)^2\to0.\label{b_eq++}
\end{align}
Since $\mu_n,\nu_n$ are eventually the unique two outliers of $T_{n,\epsilon,\phi}$, the other $n-2$ eigenvalues $\lambda_{1,n},\ldots,\lambda_{n-2,n}$ eventually belong to $[-2,2]$ and from \eqref{0_eq'+}--\eqref{b_eq++} we infer that
\begin{align}
\sum_{i=1}^{n-2}\rho_{i,n}^2\to0,\qquad\rho_{n-1,n}^2+\rho_{n,n}^2\to\frac2{1-\epsilon^{-2}},\label{1..n+}\\
\sum_{i=1}^{n-2}\tau_{i,n}^2\to0,\qquad\tau_{n-1,1}^2+\tau_{n,n}^2\to\frac2{1-\epsilon^{-2}}.\label{1..n++}
\end{align}
Now, recall from the proof of item~1 that (in the present case where $\epsilon=\phi$) all eigenvectors $\uu$ of $T_{n,\epsilon,\phi}$ satisfy $E_n\uu=\pm\uu$. Since $E_n(\vv_n+\ww_n)=\vv_n+\ww_n$ and $E(\vv_n-\ww_n)=-(\vv_n-\ww_n)$, for the eigenvectors $\uu_{i,n}$ satisfying $E_n\uu_{i,n}=\uu_{i,n}$ we have $\tau_{i,n}=0$ in the expansion \eqref{0_eq++}, and for the eigenvectors $\uu_{i,n}$ satisfying $E_n\uu_{i,n}=-\uu_{i,n}$ we have $\rho_{i,n}=0$ in the expansion \eqref{0_eq+}. It follows that, eventually, one among $\xx_n$ and $\yy_n$ (say $\xx_n$) must satisfy $E_n\xx_n=\xx_n$ and the other (say $\yy_n$) must satisfy the ``opposite'' equation $E_n\yy_n=-\yy_n$. Indeed, if we frequently had $E_n\xx_n=\xx_n$ and $E_n\yy_n=\yy_n$, then we would also have $\tau_{n-1,n}=\tau_{n,n}=0$ frequently, which is impossible by \eqref{1..n++}. Similarly, if we frequently had $E_n\xx_n=-\xx_n$ and $E_n\yy_n=-\yy_n$, then we would also have $\rho_{n-1,n}=\rho_{n,n}=0$ frequently, which is impossible by \eqref{1..n+}. By renaming $\mu_n$ and $\nu_n$ (if necessary), we can assume that the eigenvector $\xx_n$ associated with $\mu_n$ eventually satisfies $E_n\xx_n=\xx_n$, and the eigenvector $\yy_n$ associated with $\nu_n$ eventually satisfies $E_n\yy_n=-\yy_n$. In particular, we eventually have
\begin{align}
\rho_{n,n}&=0,\label{rho0}\\
\tau_{n-1,n}&=0.\label{tau0}
\end{align}
Thus, by applying \eqref{0_eq+}, \eqref{0_eq'+}, \eqref{1..n+} and \eqref{rho0}, we eventually obtain
\begin{align*}
\|\xx_n-P_{\vv_n+\ww_n}\xx_n\|_2^2&=\|\uu_{n-1,n}-P_{\vv_n+\ww_n}\uu_{n-1,n}\|_2^2=\left\|\uu_{n-1,n}-\frac{(\uu_{n-1,n},\vv_n+\ww_n)}{(\vv_n+\ww_n,\vv_n+\ww_n)}(\vv_n+\ww_n)\right\|_2^2\\
&=\left\|\uu_{n-1,n}-\frac{\rho_{n-1,n}}{\|\vv_n+\ww_n\|_2^2}\sum_{i=1}^n\rho_{i,n}\uu_{i,n}\right\|_2^2\\
&=\left\|\biggl(1-\frac{\rho_{n-1,n}^2}{\|\vv_n+\ww_n\|_2^2}\biggr)\uu_{n-1,n}-\frac{\rho_{n-1,n}}{\|\vv_n+\ww_n\|_2^2}\sum_{i=1}^{n-2}\rho_{i,n}\uu_{i,n}\right\|_2^2\\
&=\biggl(1-\frac{\rho_{n-1,n}^2}{\|\vv_n+\ww_n\|_2^2}\biggr)^2+\frac{\rho_{n-1,n}^2}{\|\vv_n+\ww_n\|_2^4}\sum_{i=1}^{n-2}\rho_{i,n}^2\to0.
\end{align*}
Similarly, one can show that $\|\yy_n-P_{\vv_n-\ww_n}\yy_n\|_2^2\to0$.
\end{proof}

\begin{table}
\small
\centering
\caption{Validation of Theorem~\ref{t_e=p} in the case $\epsilon=\phi=8/5$ where $\epsilon+\epsilon^{-1}=\phi+\phi^{-1}=2.225$. For every $n$ we have denoted by $\mu_n,\nu_n$ the unique two outliers of $T_{n,\epsilon,\phi}$ and by $\xx_n,\yy_n$ the corresponding normalized eigenvectors computed by \textsc{Julia}. We have called $\mu_n$ the outlier whose eigenvector $\xx_n$ is the closest to its projection onto $\langle\vv_n+\ww_n\rangle$ and $\nu_n$ the other outlier. We have numerically verified that, up to rounding errors, $E_n\xx_n=\xx_n$ and $E_n\yy_n=-\yy_n$ for all the considered $n$.}
\label{T3}

\medskip

\begin{tabular}{rccclcl}
\toprule
$n$ && outlier $\mu_n$ && $\hspace{1.2pt}|\mu_n-(\epsilon+\epsilon^{-1})|$ && $\|\xx_n-P_{\vv_n+\ww_n}\xx_n\|_2$\\[1pt] 
&& outlier $\nu_n$ && $|\nu_n-(\phi+\phi^{-1})|$ && $\|\yy_n-P_{\vv_n-\ww_n}\yy_n\|_2$\\
\midrule
8   && 2.2447548446486838 && $\hphantom{|\mu_n}\,2.0\cdot10^{-2}$ && $\hphantom{\|\xx_n\!-}2.1\cdot10^{-2}\vphantom{\int^{\Sigma^1}}$\\
    && 2.1991364375014231 && $\hphantom{|\mu_n}\,2.6\cdot10^{-2}$ && $\hphantom{\|\xx_n\!-}1.2\cdot10^{-2}$\\[3pt]
16  && 2.2255116405185864 && $\hphantom{|\mu_n}\,5.1\cdot10^{-4}$ && $\hphantom{\|\xx_n\!-}5.9\cdot10^{-4}\vphantom{\int^{\Sigma^1}}$\\
    && 2.2244808853312168 && $\hphantom{|\mu_n}\,5.2\cdot10^{-5}$ && $\hphantom{\|\xx_n\!-}4.9\cdot10^{-4}$\\[3pt]
32  && 2.2250002793612006 && $\hphantom{|\mu_n}\,2.8\cdot10^{-7}$ && $\hphantom{\|\xx_n\!-}2.9\cdot10^{-7}\vphantom{\int^{\Sigma^1}}$\\
    && 2.2249997206340419 && $\hphantom{|\mu_n}\,2.8\cdot10^{-7}$ && $\hphantom{\|\xx_n\!-}2.9\cdot10^{-7}$\\[3pt]
64  && 2.2250000000000821 && $\hphantom{|\mu_n}\,8.2\cdot10^{-14}$ && $\hphantom{\|\xx_n\!-}8.6\cdot10^{-14}\vphantom{\int^{\Sigma^1}}$\\
    && 2.2249999999999180 && $\hphantom{|\mu_n}\,8.2\cdot10^{-14}$ && $\hphantom{\|\xx_n\!-}8.6\cdot10^{-14}$\\[3pt]
128 && 2.2250000000000000 && $\hphantom{|\mu_n}\,7.1\cdot10^{-27}$ && $\hphantom{\|\xx_n\!-}7.5\cdot10^{-27}\vphantom{\int^{\Sigma^1}}$\\
    && 2.2250000000000000 && $\hphantom{|\mu_n}\,7.1\cdot10^{-27}$ && $\hphantom{\|\xx_n\!-}7.5\cdot10^{-27}$\\
\bottomrule
\end{tabular}
\end{table}

In Tables~\ref{T1}--\ref{T3}, we validate through numerical experiments the results presented in Theorems~\ref{t_eps}--\ref{t_e=p}. The experiments have been performed via the high-performance computing language \textsc{Julia} \cite{Julia} with a machine precision equal to $1.1\cdot10^{-308}$ (1024-bit precision).
We note that the convergences predicted by Theorems~\ref{t_eps}--\ref{t_e=p} are quite fast. Actually, this could be expected on the basis of property~\ref{vw} in Theorem~\ref{eig_props}, where we see that for $|\epsilon|,|\phi|>1$ the pairs $(\epsilon+\epsilon^{-1},\vv_n)$ and $(\phi+\phi^{-1},\ww_n)$ are substantially eigenpairs of $T_{n,\epsilon,\phi}$ already for moderate $n$ due to the exponential convergence to~0 of the error terms $\epsilon^{-n}(\epsilon\phi-1)\ee_n$ and $\phi^{-n}(\epsilon\phi-1)\ee_1$.

\section{Equations for the Eigenvalues and Eigenvectors of $\boldsymbol{T_{n,\epsilon,\phi}}$}\label{eeT}
In this section, we derive equations for the eigenvalues of $T_{n,\epsilon,\phi}$. As we shall see, the equations for the outliers are formally the same as the equations for the non-outliers with the only difference that the trigonometric functions $\sin x$ and $\cos x$ must be replaced by the corresponding hyperbolic functions $\sinh x$ and $\cosh x$. For all $\epsilon,\phi\in\mathbb R$ for which these equations can be solved, one obtains not only the eigenvalues but also the eigenvectors of $T_{n,\epsilon,\phi}$. A special role in the following derivation is played by the theory of linear difference equations \cite{DE}.

Let $\lambda\in\mathbb R$ and $\vv\in\mathbb C^n\backslash\{\mathbf0\}$, so that $(\lambda,\vv)$ is a candidate eigenpair for the real symmetric matrix $T_{n,\epsilon,\phi}$. We have
\begin{align}
T_{n,\epsilon,\phi}\vv=\lambda\vv &\iff \left\{\begin{aligned}
\epsilon v_1+v_2&=\lambda v_1\\
v_{i-1}+v_{i+1}&=\lambda v_i\qquad\forall\,i=2,\ldots,n-1\\
v_{n-1}+\phi v_n&=\lambda v_n
\end{aligned}\right.\notag\\
&\iff\text{exists a sequence }(w_0,w_1,\ldots)\text{ such that }w_i=v_i\text{ for }i=1,\ldots,n\text{ and}\notag\\
&\hphantom{\iff}\ \left\{\begin{aligned}
w_0&=\epsilon w_1\\
w_{i-1}+w_{i+1}&=\lambda w_i\qquad\forall\,i\ge1\label{chain}\\
w_{n+1}&=\phi w_n
\end{aligned}\right.
\end{align}
The characteristic equation of the linear difference equation \eqref{chain} is given by 
\begin{equation}\label{ce}
x^2-\lambda x+1=0.
\end{equation}
We consider five different cases.

\subsection[Case 1: $\lambda\in(-2,2)$]{Case 1: $\boldsymbol{\lambda\in(-2,2)}$}\label{case1}
In this case, we set $\lambda=2\cos\theta$ with $\theta\in(0,\pi)$. 
The roots of the characteristic equation \eqref{ce} are given by
\[ \frac{\lambda\pm\sqrt{\lambda^2-4}}{2}=\frac{2\cos\theta\pm2\i\sin\theta}{2}=\e^{\pm\i\theta}, \]
and they are distinct because $\theta\in(0,\pi)$. The general solution of \eqref{chain} is given by
\[ w_i=A\e^{\i i\theta}+B\e^{-\i i\theta}\qquad\forall\,i\ge0, \]
where $A,B\in\mathbb C$ are arbitrary constants. Keeping in mind that $\vv\ne\mathbf0$, we have
\begin{align*}
T_{n,\epsilon,\phi}\vv=\lambda\vv &\iff\text{exists a sequence }(w_0,w_1,\ldots)\text{ such that }w_i=v_i\text{ for }i=1,\ldots,n\text{ and}\\
&\hphantom{\iff}\ \left\{\begin{aligned}
w_i&=A\e^{\i i\theta}+B\e^{-\i i\theta}\qquad\forall\,i\ge0\\
A+B&=\epsilon A\e^{\i\theta}+\epsilon B\e^{-\i\theta}\\
A\e^{\i(n+1)\theta}+B\e^{-\i(n+1)\theta}&=\phi A\e^{\i n\theta}+\phi B\e^{-\i n\theta}
\end{aligned}\right.\notag\\
&\iff\left\{\begin{aligned}
v_i&=A\e^{\i i\theta}+B\e^{-\i i\theta}\qquad\forall\,i=1,\ldots,n\\
A&=\frac{\epsilon\e^{-\i\theta}-1}{1-\epsilon\e^{\i\theta}}B\qquad\mbox{\footnotesize($1-\epsilon\e^{\i\theta}\ne0$ because $\theta\in(0,\pi)$)}\\
0&=\left|\begin{array}{cc}
1-\epsilon\e^{\i\theta} & 1-\epsilon\e^{-\i\theta}\\
\e^{\i n\theta}(\e^{\i\theta}-\phi) & \e^{-\i n\theta}(\e^{-\i\theta}-\phi)
\end{array}\right|
\end{aligned}\right.\\
&\iff\left\{\begin{aligned}
v_i&=B\Bigl(\frac{\epsilon\e^{-\i\theta}-1}{1-\epsilon\e^{\i\theta}}\e^{\i i\theta}+\e^{-\i i\theta}\Bigr)=\frac{2\i B}{\epsilon\e^{\i\theta}-1}\Bigl(\sin(i\theta)-\epsilon\sin((i-1)\theta)\Bigr)\qquad\forall\,i=1,\ldots,n\\
0&=\sin((n+1)\theta)-(\epsilon+\phi)\sin(n\theta)+\epsilon\phi\sin((n-1)\theta)
\end{aligned}\right.
\end{align*}
We summarize in the next theorem the result that we have obtained.
\begin{theorem}\label{p(-2,2)}
For every $\theta\in(0,\pi)$, the number $\lambda=2\cos\theta$ is an eigenvalue of $T_{n,\epsilon,\phi}$ if and only if
\begin{equation}\label{lambda_expr}
\sin((n+1)\theta)-(\epsilon+\phi)\sin(n\theta)+\epsilon\phi\sin((n-1)\theta)=0.
\end{equation}
In this case, a corresponding eigenvector $\vv=(v_1,\ldots,v_n)$ is given by
\begin{equation}\label{v_expr}
v_i=\sin(i\theta)-\epsilon\sin((i-1)\theta),\qquad i=1,\ldots,n.
\end{equation}
\end{theorem}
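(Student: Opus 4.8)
The plan is to recast the eigenvalue equation as a constant-coefficient three-term recurrence subject to two boundary conditions, and then to read off both the spectral condition \eqref{lambda_expr} and the eigenvector \eqref{v_expr} from the closed-form solution of that recurrence; I would run the whole argument as a single chain of equivalences so that the ``if and only if'' comes for free. First I would note that $T_{n,\epsilon,\phi}\vv=\lambda\vv$ is equivalent to extending the entries $v_1,\dots,v_n$ to a sequence $(w_0,w_1,\dots)$ satisfying $w_{i-1}+w_{i+1}=\lambda w_i$ for all $i\ge1$ together with $w_0=\epsilon w_1$ and $w_{n+1}=\phi w_n$, the last two relations being exactly what the first and last rows impose once the interior rows are identified with the recurrence; this is the reformulation \eqref{chain}.

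Since $\lambda=2\cos\theta$ with $\theta\in(0,\pi)$, the characteristic equation \eqref{ce} has the two \emph{distinct} roots $\e^{\pm\i\theta}$, so the general solution is $w_i=A\e^{\i i\theta}+B\e^{-\i i\theta}$. Substituting this into the two boundary conditions yields a homogeneous $2\times2$ linear system in $(A,B)$ whose coefficient matrix is the one displayed in the derivation above. Distinctness of the roots guarantees that only $(A,B)=(0,0)$ produces the zero sequence (two consecutive vanishing $w_i$ already force $A=B=0$ because the relevant $2\times2$ determinant equals $-2\i\sin\theta\ne0$), so a nonzero eigenvector exists precisely when the system admits a nontrivial solution, i.e. precisely when its determinant vanishes. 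This equivalence is the conceptual heart of the statement.

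What remains is computational. For \eqref{lambda_expr} I would expand the $2\times2$ determinant, group the terms according to the powers $\e^{\pm\i(n+1)\theta}$, $\e^{\pm\i n\theta}$, $\e^{\pm\i(n-1)\theta}$, and apply $\e^{\i k\theta}-\e^{-\i k\theta}=2\i\sin(k\theta)$; after pulling out the common factor $-2\i$ the condition becomes $\sin((n+1)\theta)-(\epsilon+\phi)\sin(n\theta)+\epsilon\phi\sin((n-1)\theta)=0$. For \eqref{v_expr} I would solve the first boundary condition for the ratio $A/B=(\epsilon\e^{-\i\theta}-1)/(1-\epsilon\e^{\i\theta})$—legitimate because $1-\epsilon\e^{\i\theta}\ne0$ on $(0,\pi)$—substitute back into $w_i$, place the two exponentials over a common denominator, and once more convert the exponential differences into sines; the resulting scalar prefactor $2\i B/(\epsilon\e^{\i\theta}-1)$ is irrelevant up to the normalization of the eigenvector, leaving $v_i=\sin(i\theta)-\epsilon\sin((i-1)\theta)$.

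I do not expect a genuine conceptual obstacle here; the only real hazard is bookkeeping, namely keeping the complex-exponential algebra organized so that both the determinant and the eigenvector telescope cleanly into the advertised sine expressions, and being careful to invoke $1-\epsilon\e^{\i\theta}\ne0$ for $\theta\in(0,\pi)$ so that the reduction to the single free parameter $B$ is valid and delivers a genuinely nonzero $\vv$.
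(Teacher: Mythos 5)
Your proposal is correct and follows essentially the same route as the paper: the reformulation as the recurrence \eqref{chain} with boundary conditions $w_0=\epsilon w_1$, $w_{n+1}=\phi w_n$, the general solution $A\e^{\i i\theta}+B\e^{-\i i\theta}$ from the distinct roots of \eqref{ce}, the vanishing of the $2\times2$ determinant as the spectral condition, and the reduction $A=\frac{\epsilon\e^{-\i\theta}-1}{1-\epsilon\e^{\i\theta}}B$ (valid since $1-\epsilon\e^{\i\theta}\ne0$ for $\theta\in(0,\pi)$) to obtain \eqref{v_expr}. Your added remark that two consecutive vanishing $w_i$ force $A=B=0$ is a welcome explicit justification that a nontrivial $(A,B)$ yields a nonzero eigenvector, a point the paper leaves implicit.
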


\subsection[Case 2: $\lambda\in(2,\infty)$]{Case 2: $\boldsymbol{\lambda\in(2,\infty)}$}\label{case2}
In this case, we set $\lambda=2\cosh\theta$ with $\theta\in(0,\infty)$.
The roots of the characteristic equation \eqref{ce} are given by
\[ \frac{\lambda\pm\sqrt{\lambda^2-4}}{2}=\frac{2\cosh\theta\pm2\sinh\theta}{2}=\e^{\pm\theta}, \]
and they are distinct because $\theta\in(0,\infty)$. The general solution of \eqref{chain} is given by
\[ w_i=A\e^{i\theta}+B\e^{-i\theta}\qquad\forall\,i\ge0, \]
where $A,B\in\mathbb C$ are arbitrary constants. Keeping in mind that $\vv\ne\mathbf0$, we have
\begin{align*}
T_{n,\epsilon,\phi}\vv=\lambda\vv &\iff\text{exists a sequence }(w_0,w_1,\ldots)\text{ such that }w_i=v_i\text{ for }i=1,\ldots,n\text{ and}\\
&\hphantom{\iff}\ \left\{\begin{aligned}
w_i&=A\e^{i\theta}+B\e^{-i\theta}\qquad\forall\,i\ge0\\
A+B&=\epsilon A\e^\theta+\epsilon B\e^{-\theta}\\
A\e^{(n+1)\theta}+B\e^{-(n+1)\theta}&=\phi A\e^{n\theta}+\phi B\e^{-n\theta}
\end{aligned}\right.\notag\\
&\iff\left\{\begin{aligned}
v_i&=A\e^{i\theta}+B\e^{-i\theta}\qquad\forall\,i=1,\ldots,n\\
A+B&=\epsilon A\e^\theta+\epsilon B\e^{-\theta}\qquad\mbox{\footnotesize(this equation is not identically 0 because $1-\epsilon\e^\theta=0\implies1-\epsilon\e^{-\theta}\ne0$)}\\
0&=\left|\begin{array}{cc}
1-\epsilon\e^{\theta} & 1-\epsilon\e^{-\theta}\\
\e^{n\theta}(\e^\theta-\phi) & \e^{-n\theta}(\e^{-\theta}-\phi)
\end{array}\right|
\end{aligned}\right.\\
&\iff\left\{\begin{aligned}
v_i&=A\e^{i\theta}+B\e^{-i\theta}\qquad\forall\,i=1,\ldots,n\\
A+B&=\epsilon A\e^\theta+\epsilon B\e^{-\theta}\\
0&=\sinh((n+1)\theta)-(\epsilon+\phi)\sinh(n\theta)+\epsilon\phi\sinh((n-1)\theta)
\end{aligned}\right.
\end{align*}
\begin{itemize}[nolistsep,leftmargin=*]
	\item If $1-\epsilon\e^\theta=0$, i.e., $\e^{-\theta}=\epsilon$, then the equation $A+B=\epsilon A\e^\theta+\epsilon B\e^{-\theta}$ is equivalent to $B=0$ and so
	\[ T_{n,\epsilon,\phi}\vv=\lambda\vv\iff\left\{\begin{aligned}
v_i&=A\e^{i\theta}=A\epsilon^{-i}\qquad\forall\,i=1,\ldots,n\\
0&=\sinh((n+1)\theta)-(\epsilon+\phi)\sinh(n\theta)+\epsilon\phi\sinh((n-1)\theta)
\end{aligned}\right. \]
	\item If $1-\epsilon\e^\theta\ne0$, then the equation $A+B=\epsilon A\e^\theta+\epsilon B\e^{-\theta}$ is equivalent to $A=\dfrac{\epsilon\e^{-\theta}-1}{1-\epsilon\e^\theta}B$ and so
	\[ T_{n,\epsilon,\phi}\vv=\lambda\vv\iff\left\{\begin{aligned}
v_i&=B\Bigl(\dfrac{\epsilon\e^{-\theta}-1}{1-\epsilon\e^\theta}\e^{i\theta}+\e^{-i\theta}\Bigr)=\frac{2B}{\epsilon\e^\theta-1}\Bigl(\sinh(i\theta)-\epsilon\sinh((i-1)\theta)\Bigr)\qquad\forall\,i=1,\ldots,n\\
0&=\sinh((n+1)\theta)-(\epsilon+\phi)\sinh(n\theta)+\epsilon\phi\sinh((n-1)\theta)
\end{aligned}\right. \]
\end{itemize}
As often happens in mathematics, the ``limit'' case $1-\epsilon\e^\theta=0$ merges with the case $1-\epsilon\e^\theta\ne0$. Indeed, if $1-\epsilon\e^\theta=0$ then $\epsilon=\e^{-\theta}\in(0,1)$ (because $\theta\in(0,\infty)$) and
\begin{align*}
\sinh(i\theta)-\epsilon\sinh((i-1)\theta)=\frac{1-\epsilon^2}{2}\,\epsilon^{-i},\qquad i=1,\ldots,n.
\end{align*}
We summarize in the next theorem the result that we have obtained.

\begin{theorem}\label{p(2,oo)}
For every $\theta\in(0,\infty)$, the number $\lambda=2\cosh\theta$ is an eigenvalue of $T_{n,\epsilon,\phi}$ if and only if
\begin{equation}\label{lambda_exprh}
\sinh((n+1)\theta)-(\epsilon+\phi)\sinh(n\theta)+\epsilon\phi\sinh((n-1)\theta)=0.
\end{equation}
In this case, a corresponding eigenvector $\vv=(v_1,\ldots,v_n)$ is given by
\begin{equation}\label{v_exprh}
v_i=\sinh(i\theta)-\epsilon\sinh((i-1)\theta),\qquad i=1,\ldots,n.
\end{equation}
\end{theorem}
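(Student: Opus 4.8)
The plan is to run the argument of Case~1 (which gave Theorem~\ref{p(-2,2)}) almost verbatim, replacing the unimodular characteristic roots $\e^{\pm\i\theta}$ by the real roots that arise when $\lambda>2$. Setting $\lambda=2\cosh\theta$ with $\theta\in(0,\infty)$ makes the characteristic equation \eqref{ce} have the two \emph{distinct} real roots $\e^{\pm\theta}$, so the general solution of the recurrence \eqref{chain} is $w_i=A\e^{i\theta}+B\e^{-i\theta}$ with $A,B\in\mathbb C$. First I would insert this Ansatz into the two boundary conditions $w_0=\epsilon w_1$ and $w_{n+1}=\phi w_n$. Because the roots are distinct, $\vv\ne\mathbf0$ is equivalent to $(A,B)\ne(0,0)$, so $T_{n,\epsilon,\phi}\vv=\lambda\vv$ holds for some nonzero $\vv$ exactly when the homogeneous $2\times2$ system in $(A,B)$ with coefficient matrix
\[ \begin{bmatrix} 1-\epsilon\e^{\theta} & 1-\epsilon\e^{-\theta}\\ \e^{n\theta}(\e^\theta-\phi) & \e^{-n\theta}(\e^{-\theta}-\phi) \end{bmatrix} \]
is singular, i.e.\ exactly when its determinant vanishes.

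The next step is purely computational: expanding this determinant, multiplying out the two products, and collecting the differences $\e^{k\theta}-\e^{-k\theta}=2\sinh(k\theta)$ for $k=n+1,n,n-1$ gives $-2$ times the left-hand side of \eqref{lambda_exprh}. Hence the singularity condition is precisely equation \eqref{lambda_exprh}, which proves the eigenvalue characterization. For the eigenvector, in the generic case $1-\epsilon\e^\theta\ne0$ I would solve the first boundary equation for $A=\frac{\epsilon\e^{-\theta}-1}{1-\epsilon\e^\theta}\,B$ and substitute it back; a one-line simplification,
\[ \frac{\epsilon\e^{-\theta}-1}{1-\epsilon\e^\theta}\,\e^{i\theta}+\e^{-i\theta}=\frac{-2}{1-\epsilon\e^\theta}\bigl(\sinh(i\theta)-\epsilon\sinh((i-1)\theta)\bigr), \]
shows that $v_i$ is a nonzero scalar multiple of the claimed expression \eqref{v_exprh}.

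The one place where the hyperbolic case genuinely departs from Case~1, and the step I expect to be the main obstacle, is the degenerate value $1-\epsilon\e^\theta=0$, i.e.\ $\epsilon=\e^{-\theta}$ (which forces $\epsilon\in(0,1)$ since $\theta>0$). Here the first boundary condition no longer determines the ratio $A/B$ but instead collapses to $B=0$, so the eigenvector is simply $v_i=A\e^{i\theta}=A\epsilon^{-i}$, and one must check that this agrees with the uniform formula \eqref{v_exprh}. I would verify this through the identity $\sinh(i\theta)-\epsilon\sinh((i-1)\theta)=\tfrac{1-\epsilon^2}{2}\,\epsilon^{-i}$, valid precisely when $\epsilon=\e^{-\theta}$, which exhibits the right-hand side of \eqref{v_exprh} as a nonzero multiple (note $\epsilon\in(0,1)$, so $1-\epsilon^2\ne0$) of $\epsilon^{-i}$. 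Thus the two cases merge, formula \eqref{v_exprh} is a valid eigenvector throughout $(0,\infty)$, and the proof is complete.
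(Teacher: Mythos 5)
Your proposal is correct and follows essentially the same route as the paper's own derivation in Section~\ref{case2}: the same Ansatz $w_i=A\e^{i\theta}+B\e^{-i\theta}$, the same $2\times2$ singularity condition yielding \eqref{lambda_exprh}, and the same treatment of the degenerate case $1-\epsilon\e^\theta=0$ via the identity $\sinh(i\theta)-\epsilon\sinh((i-1)\theta)=\frac{1-\epsilon^2}{2}\,\epsilon^{-i}$ showing that it merges with the generic case. Nothing is missing.
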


\subsection[Case 3: $\lambda=2$]{Case 3: $\boldsymbol{\lambda=2}$}\label{case3}
In this case, the characteristic equation \eqref{ce} has only one root $x=1$ with multiplicity 2. The general solution of \eqref{chain} is given by
\[ w_i=A+Bi\qquad\forall\,i\ge0, \]
where $A,B$ are arbitrary constants. Keeping in mind that $\vv\ne\mathbf0$, we have
\begin{align*}
T_{n,\epsilon,\phi}\vv=\lambda\vv &\iff\text{exists a sequence }(w_0,w_1,\ldots)\text{ such that }w_i=v_i\text{ for }i=1,\ldots,n\text{ and}\\
&\hphantom{\iff}\ \left\{\begin{aligned}
w_i&=A+Bi\qquad\forall\,i\ge0\\
A&=\epsilon A+\epsilon B\\
A+B(n+1)&=\phi A+\phi Bn
\end{aligned}\right.\notag\\
&\iff\left\{\begin{aligned}
v_i&=A+Bi\qquad\forall\,i=1,\ldots,n\\
A&=\epsilon A+\epsilon B\qquad\mbox{\footnotesize(this equation is not identically 0 because $1-\epsilon=0\implies\epsilon\ne0$)}\\
0&=\left|\begin{array}{cc}
1-\epsilon & -\epsilon\\
1-\phi & n+1-\phi n
\end{array}\right|
\end{aligned}\right.\\
&\iff\left\{\begin{aligned}
v_i&=A+Bi\qquad\forall\,i=1,\ldots,n\\
A&=\epsilon A+\epsilon B\\
0&=n+1-(\epsilon+\phi)n+\epsilon\phi(n-1)
\end{aligned}\right.
\end{align*}
\begin{itemize}[nolistsep,leftmargin=*]
	\item If $1-\epsilon=0$, then the equation $A=\epsilon A+\epsilon B$ is equivalent to $B=0$ and so
	\[ T_{n,\epsilon,\phi}\vv=\lambda\vv\iff\left\{\begin{aligned}
v_i&=A\qquad\forall\,i=1,\ldots,n\\
0&=n+1-(\epsilon+\phi)n+\epsilon\phi(n-1)
\end{aligned}\right. \]
	\item If $1-\epsilon\ne0$, then the equation $A=\epsilon A+\epsilon B$ is equivalent to $A=\dfrac{\epsilon}{1-\epsilon}B$ and so
	\[ T_{n,\epsilon,\phi}\vv=\lambda\vv\iff\left\{\begin{aligned}
v_i&=B\Bigl(\dfrac{\epsilon}{1-\epsilon}+i\Bigr)=\frac{B}{1-\epsilon}\Bigl(\epsilon+(1-\epsilon)i\Bigr)\qquad\forall\,i=1,\ldots,n\\
0&=n+1-(\epsilon+\phi)n+\epsilon\phi(n-1)
\end{aligned}\right. \]
\end{itemize}
The case $1-\epsilon=0$ merges with the case $1-\epsilon\ne0$, because if $1-\epsilon=0$ then $\epsilon=1$ and
\begin{align*}
\epsilon+(1-\epsilon)i=\epsilon,\qquad i=1,\ldots,n.
\end{align*}
We summarize in the next theorem the result that we have obtained.

\begin{theorem}\label{p=2}
The number $\lambda=2$ is an eigenvalue of $T_{n,\epsilon,\phi}$ if and only if
\begin{equation}\label{lambda_expr2}
n+1-(\epsilon+\phi)n+\epsilon\phi(n-1)=0.
\end{equation}
In this case, a corresponding eigenvector $\vv=(v_1,\ldots,v_n)$ is given by
\begin{equation}\label{v_expr2}
v_i=\epsilon+(1-\epsilon)i,\qquad i=1,\ldots,n.
\end{equation}
\end{theorem}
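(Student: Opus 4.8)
The plan is to specialize the exact template already used in Cases~1 and~2 (Theorems~\ref{p(-2,2)} and~\ref{p(2,oo)}) to the value $\lambda=2$. First I would invoke the equivalence \eqref{chain}, rewriting $T_{n,\epsilon,\phi}\vv=2\vv$ as the boundary-value problem for a sequence $(w_0,w_1,\ldots)$ with $w_i=v_i$ for $i=1,\ldots,n$ that satisfies the three-term recurrence $w_{i-1}+w_{i+1}=2w_i$ together with the two boundary conditions $w_0=\epsilon w_1$ and $w_{n+1}=\phi w_n$.

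The decisive structural feature of this case is that the characteristic equation \eqref{ce} becomes $x^2-2x+1=(x-1)^2=0$, which has the single root $x=1$ with multiplicity~$2$. By the theory of linear difference equations \cite{DE}, the general solution is therefore the polynomial $w_i=A+Bi$ rather than a combination of two distinct geometric sequences. Substituting $w_i=A+Bi$ into the two boundary conditions yields a homogeneous $2\times2$ linear system in the unknowns $(A,B)$: the condition $w_0=\epsilon w_1$ reads $A=\epsilon A+\epsilon B$, and the condition $w_{n+1}=\phi w_n$ reads $A+B(n+1)=\phi A+\phi Bn$. A nonzero eigenvector $\vv\ne\mathbf0$ exists if and only if this system has a nontrivial solution, i.e.\ if and only if the associated coefficient determinant
\[ \left|\begin{array}{cc} 1-\epsilon & -\epsilon\\ 1-\phi & n+1-\phi n \end{array}\right| \]
vanishes. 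Expanding this determinant produces precisely the eigenvalue condition $n+1-(\epsilon+\phi)n+\epsilon\phi(n-1)=0$, which is \eqref{lambda_expr2}.

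To extract the eigenvector, I would solve the first boundary equation $A=\epsilon A+\epsilon B$ for the ratio $A:B$, obtaining $A=\tfrac{\epsilon}{1-\epsilon}B$, so that $v_i=A+Bi$ is, up to the nonzero scaling factor $\tfrac{B}{1-\epsilon}$, equal to $\epsilon+(1-\epsilon)i$; this is exactly \eqref{v_expr2}. The only genuine obstacle is the degeneracy at $\epsilon=1$: there the equation $A=\epsilon A+\epsilon B$ no longer expresses $A$ in terms of $B$ but instead forces $B=0$, so the parametrization $A=\tfrac{\epsilon}{1-\epsilon}B$ breaks down. I would handle this exactly as in the analysis of Case~2, treating $1-\epsilon=0$ and $1-\epsilon\ne0$ separately and then observing that the two cases merge. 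Indeed, when $\epsilon=1$ the proposed formula $v_i=\epsilon+(1-\epsilon)i$ collapses to the constant vector $v_i=\epsilon=1$, which is precisely the $B=0$ solution dictated by the degenerate boundary condition. Hence the single closed-form expression \eqref{v_expr2} is valid in both regimes, and no separate statement is required for $\epsilon=1$.
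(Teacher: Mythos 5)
Your proposal is correct and follows essentially the same route as the paper: reduce to the difference equation \eqref{chain}, note that the characteristic polynomial degenerates to $(x-1)^2$ so the general solution is $w_i=A+Bi$, impose the two boundary conditions, derive \eqref{lambda_expr2} as the vanishing of the same $2\times2$ determinant, and recover the eigenvector \eqref{v_expr2} with the same separate treatment of $1-\epsilon=0$ versus $1-\epsilon\ne0$ followed by the observation that the two cases merge. No gaps.
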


\begin{remark}\label{(0,pi)->2}
Note that
\begin{align*}
&\lim_{\theta\to0}\frac{\sin((n+1)\theta)-(\epsilon+\phi)\sin(n\theta)+\epsilon\phi\sin((n-1)\theta)}{\sin\theta}\\[8pt]
&\quad=\lim_{\theta\to0}\frac{\sinh((n+1)\theta)-(\epsilon+\phi)\sinh(n\theta)+\epsilon\phi\sinh((n-1)\theta)}{\sinh\theta}=n+1-(\epsilon+\phi)n+\epsilon\phi(n-1),\\[8pt]
&\lim_{\theta\to0}\frac{\sin(i\theta)-\epsilon\sin((i-1)\theta)}{\sin\theta}=\lim_{\theta\to0}\frac{\sinh(i\theta)-\epsilon\sinh((i-1)\theta)}{\sinh\theta}=\epsilon+(1-\epsilon)i.
\end{align*}
This shows that \eqref{lambda_expr2}--\eqref{v_expr2} can be obtained from \eqref{lambda_expr}--\eqref{v_expr} (resp., \eqref{lambda_exprh}--\eqref{v_exprh}) by division by $\sin\theta$ (resp., $\sinh\theta$). In particular, we could reformulate Theorems~\ref{p(-2,2)}--\ref{p(2,oo)} to include Theorem~\ref{p=2}.
\end{remark}

\subsection[Case 4: $\lambda\in(-\infty,-2)$]{Case 4: $\boldsymbol{\lambda\in(-\infty,-2)}$}\label{case4}
In this case, we set $\lambda=-2\cosh\theta$ with $\theta\in(0,\infty)$. The derivation is essentially the same as in Section~\ref{case2}; we leave the details to the reader and we report the analog of Theorem~\ref{p(2,oo)}.

\begin{theorem}\label{p(-oo,-2)}
For every $\theta\in(0,\infty)$, the number $\lambda=-2\cosh\theta$ is an eigenvalue of $T_{n,\epsilon,\phi}$ if and only if
\begin{equation}\label{lambda_exprh'}
\sinh((n+1)\theta)+(\epsilon+\phi)\sinh(n\theta)+\epsilon\phi\sinh((n-1)\theta)=0.
\end{equation}
In this case, a corresponding eigenvector $\vv=(v_1,\ldots,v_n)$ is given by
\begin{equation}\label{v_exprh'}
v_i=(-1)^i\bigl(\sinh(i\theta)+\epsilon\sinh((i-1)\theta)\bigr),\qquad i=1,\ldots,n.
\end{equation}
\end{theorem}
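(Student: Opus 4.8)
The plan is to reproduce the derivation of Section~\ref{case2} line by line, keeping track of the single structural change produced by the substitution $\lambda=-2\cosh\theta$. First I would inherit the reduction, carried out for general $\lambda$ just before Section~\ref{case1}, of the eigenvalue equation $T_{n,\epsilon,\phi}\vv=\lambda\vv$ to the chain system~\eqref{chain} with boundary conditions $w_0=\epsilon w_1$ and $w_{n+1}=\phi w_n$. Plugging $\lambda=-2\cosh\theta$ into the characteristic equation~\eqref{ce} gives $\frac{\lambda\pm\sqrt{\lambda^2-4}}{2}=-\cosh\theta\pm\sinh\theta=-\e^{\mp\theta}$, two distinct roots since $\theta\in(0,\infty)$. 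Hence the general solution of~\eqref{chain} is $w_i=A(-\e^\theta)^i+B(-\e^{-\theta})^i=(-1)^i(A\e^{i\theta}+B\e^{-i\theta})$; the only new feature relative to Section~\ref{case2} is the global factor $(-1)^i$.

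Next I would impose the two boundary conditions. From $w_0=A+B$ and $w_1=-(A\e^\theta+B\e^{-\theta})$, the condition $w_0=\epsilon w_1$ reads $A(1+\epsilon\e^\theta)+B(1+\epsilon\e^{-\theta})=0$, and after dividing by $(-1)^n$ the condition $w_{n+1}=\phi w_n$ reads $A\e^{n\theta}(\e^\theta+\phi)+B\e^{-n\theta}(\e^{-\theta}+\phi)=0$. Thus the $2\times2$ coefficient matrix for $(A,B)$ is exactly the one of Section~\ref{case2} with $\epsilon,\phi$ replaced by $-\epsilon,-\phi$ (each subtraction has become an addition), and a nontrivial $(A,B)$ exists if and only if its determinant vanishes.

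The one genuine computation is the determinant expansion. Writing out $(1+\epsilon\e^\theta)\e^{-n\theta}(\e^{-\theta}+\phi)-(1+\epsilon\e^{-\theta})\e^{n\theta}(\e^\theta+\phi)$, collecting the powers $\e^{\pm(n+1)\theta},\e^{\pm n\theta},\e^{\pm(n-1)\theta}$, and using $\e^{-m\theta}-\e^{m\theta}=-2\sinh(m\theta)$ turns the vanishing determinant, up to the harmless factor $-2$, into precisely~\eqref{lambda_exprh'}. For the eigenvector I would solve the first boundary relation for $A/B$, substitute into $w_i=(-1)^i(A\e^{i\theta}+B\e^{-i\theta})$, and simplify the resulting bracket to $-2\bigl(\sinh(i\theta)+\epsilon\sinh((i-1)\theta)\bigr)/(1+\epsilon\e^\theta)$; choosing the free scalar so that the constant prefactor equals $1$ yields~\eqref{v_exprh'}. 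The degenerate case $1+\epsilon\e^\theta=0$, i.e.\ $\epsilon=-\e^{-\theta}$, would be handled separately (it forces $B=0$) and shown to merge with the generic formula, exactly as at the end of Section~\ref{case2}. I do not expect any real obstacle here: the sign bookkeeping in the determinant is the sole place an error could slip in, and it is routine.

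A slicker alternative, which I would at least mention, is to avoid the computation altogether. With $D=\operatorname{diag}((-1)^i)_{i=1}^n$ one checks by inspection the elementary identity $T_{n,-\epsilon,-\phi}=-D\,T_{n,\epsilon,\phi}\,D$, so that $(\lambda,\vv)$ is an eigenpair of $T_{n,\epsilon,\phi}$ if and only if $(-\lambda,D\vv)$ is an eigenpair of $T_{n,-\epsilon,-\phi}$. Applying Theorem~\ref{p(2,oo)} to $T_{n,-\epsilon,-\phi}$ at $-\lambda=2\cosh\theta$ sends its eigenvalue equation~\eqref{lambda_exprh} (with $\epsilon,\phi$ replaced by $-\epsilon,-\phi$) to~\eqref{lambda_exprh'}, and its eigenvector $\sinh(i\theta)+\epsilon\sinh((i-1)\theta)$ to $(-1)^i\bigl(\sinh(i\theta)+\epsilon\sinh((i-1)\theta)\bigr)$, which is~\eqref{v_exprh'}. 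This reduces Theorem~\ref{p(-oo,-2)} to Theorem~\ref{p(2,oo)} in two lines.
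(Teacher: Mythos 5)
Your proposal is correct and follows exactly the route the paper intends: the paper's own ``proof'' of Theorem~\ref{p(-oo,-2)} simply states that the derivation is the same as in Section~\ref{case2} and leaves the details to the reader, and you have supplied precisely those details (roots $-\e^{\mp\theta}$, the global $(-1)^i$ factor, the sign flips $\epsilon,\phi\to-\epsilon,-\phi$ in the boundary system, and the merging degenerate case $1+\epsilon\e^\theta=0$) with correct sign bookkeeping throughout. Your alternative reduction via $T_{n,-\epsilon,-\phi}=-D\,T_{n,\epsilon,\phi}\,D$ with $D=\operatorname{diag}((-1)^i)_{i=1}^n$ is also valid and is a genuinely slicker way to deduce Theorem~\ref{p(-oo,-2)} directly from Theorem~\ref{p(2,oo)}.
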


\subsection[Case 5: $\lambda=-2$]{Case 5: $\boldsymbol{\lambda=-2}$}\label{case5}
The derivation is essentially the same as in Section~\ref{case3}; we leave the details to the reader and we report the analog of Theorem~\ref{p=2}.

\begin{theorem}\label{p=-2}
The number $\lambda=-2$ is an eigenvalue of $T_{n,\epsilon,\phi}$ if and only if
\begin{equation}\label{lambda_expr2'}
n+1+(\epsilon+\phi)n+\epsilon\phi(n-1)=0.
\end{equation}
In this case, a corresponding eigenvector $\vv=(v_1,\ldots,v_n)$ is given by
\begin{equation}\label{v_expr2'}
v_i=(-1)^i\bigl(-\epsilon+(1+\epsilon)i\bigr),\qquad i=1,\ldots,n.
\end{equation}
\end{theorem}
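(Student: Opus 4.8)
The plan is to mirror the derivation of Theorem~\ref{p=2} in Section~\ref{case3}, the only structural change being the value of the repeated root of the characteristic equation~\eqref{ce}. Setting $\lambda=-2$ in \eqref{ce} gives $x^2+2x+1=0$, so $x=-1$ is a root of multiplicity~$2$. By the theory of linear difference equations, the general solution of the recurrence $w_{i-1}+w_{i+1}=-2w_i$ appearing in \eqref{chain} is therefore
\[ w_i=(-1)^i(A+Bi),\qquad i\ge0, \]
with $A,B$ arbitrary constants; this is the exact analog of the solution $w_i=A+Bi$ used for $\lambda=2$, now carrying the alternating factor $(-1)^i$ coming from the root $-1$.

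First I would impose the two boundary conditions $w_0=\epsilon w_1$ and $w_{n+1}=\phi w_n$ from \eqref{chain}. Evaluating the closed form at $i=0,1,n,n+1$ and using $(-1)^{n+1}=-(-1)^n$, these reduce to a homogeneous $2\times2$ linear system in $(A,B)$ whose coefficient matrix is
\[ \begin{bmatrix} 1+\epsilon & \epsilon\\ 1+\phi & n+1+\phi n\end{bmatrix}. \]
The matrix $T_{n,\epsilon,\phi}$ admits $\lambda=-2$ as an eigenvalue precisely when this system has a nontrivial solution, i.e.\ when the determinant vanishes. Expanding the determinant yields $(1+\epsilon)(n+1+\phi n)-\epsilon(1+\phi)=n+1+(\epsilon+\phi)n+\epsilon\phi(n-1)$, which is exactly condition~\eqref{lambda_expr2'}.

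For the eigenvector, I would solve the first row of the system: when $1+\epsilon\ne0$ it gives $A=-\frac{\epsilon}{1+\epsilon}B$, so that $v_i=w_i=(-1)^i\frac{B}{1+\epsilon}\bigl(-\epsilon+(1+\epsilon)i\bigr)$, and discarding the nonzero scalar $B/(1+\epsilon)$ produces \eqref{v_expr2'}. The one point requiring care is the degenerate case $1+\epsilon=0$: here the first equation forces $B=0$ and the eigenvector collapses to $v_i=(-1)^i$. As in Section~\ref{case3}, this limiting case is not genuinely separate, since substituting $\epsilon=-1$ directly into \eqref{v_expr2'} also gives $v_i=(-1)^i$; thus the single formula \eqref{v_expr2'} covers both subcases and the two merge just as they did for $\lambda=2$. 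I expect no real obstacle beyond correctly bookkeeping the alternating signs in the boundary conditions, which is the one place where the computation differs visibly from the $\lambda=2$ derivation.
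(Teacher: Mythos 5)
Your proposal is correct and follows exactly the route the paper intends: the paper explicitly states that the proof of Theorem~\ref{p=-2} is obtained by repeating the derivation of Section~\ref{case3} with the repeated root $-1$ in place of $1$, and your boundary conditions, determinant expansion, and treatment of the degenerate case $1+\epsilon=0$ all check out and reproduce \eqref{lambda_expr2'}--\eqref{v_expr2'}.
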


\begin{remark}\label{(0,pi)->-2}
Note that
\begin{align*}
&\lim_{\theta\to\pi}(-1)^n\frac{\sin((n+1)\theta)-(\epsilon+\phi)\sin(n\theta)+\epsilon\phi\sin((n-1)\theta)}{\sin\theta}\\[8pt]
&\quad=\lim_{\theta\to0}\frac{\sinh((n+1)\theta)+(\epsilon+\phi)\sinh(n\theta)+\epsilon\phi\sinh((n-1)\theta)}{\sinh\theta}=n+1+(\epsilon+\phi)n+\epsilon\phi(n-1),\\[8pt]
&-\lim_{\theta\to\pi}\frac{\sin(i\theta)-\epsilon\sin((i-1)\theta)}{\sin\theta}=\lim_{\theta\to0}\frac{(-1)^i\bigl(\sinh(i\theta)+\epsilon\sinh((i-1)\theta)\bigr)}{\sinh\theta}=(-1)^i\bigl(-\epsilon+(1+\epsilon)i\bigr).
\end{align*}
This shows that \eqref{lambda_expr2'}--\eqref{v_expr2'} can be obtained from \eqref{lambda_expr}--\eqref{v_expr} (resp., \eqref{lambda_exprh'}--\eqref{v_exprh'}) by division by $\sin\theta$ (resp., $\sinh\theta$). In particular, we could reformulate Theorems~\ref{p(-2,2)} and~\ref{p(-oo,-2)} to include Theorem~\ref{p=-2}.
\end{remark}

\section{Eigendecomposition of $\boldsymbol{T_{n,\epsilon,\phi}}$ for Specific Choices of $\boldsymbol{\epsilon}$ and $\boldsymbol{\varphi}$}\label{sec:sep}

\subsection[$\epsilon,\phi\in\{0,1,-1\}$]{$\boldsymbol{\epsilon,\phi\in\{0,1,-1\}}$}\label{01-1}
As noted in the introduction, the eigendecomposition of $T_{n,\epsilon,\phi}$ for $\epsilon,\phi\in\{0,1,-1\}$ is already available in the literature.
The purpose of this section is simply to show that it can also be obtained from Theorems~\ref{p(-2,2)}, \ref{p=2}, and~\ref{p=-2}. Note that Theorems~\ref{p(2,oo)} and~\ref{p(-oo,-2)} are useless in this case as $T_{n,\epsilon,\phi}$ does not have outliers for $\epsilon,\phi\in\{0,1,-1\}$; see Theorem~\ref{eig_props}. 

For $(\epsilon,\phi)=(0,0)$, Theorem~\ref{p(-2,2)} immediately yields the eigenpairs $(\lambda_k,\vv^{(k)})$, $k=1,\ldots,n$, with
\[ \lambda_k=2\cos\theta_k,\qquad\vv^{(k)}=\bigl[\sin(i\theta_k)\bigr]_{i=1}^n,\qquad\theta_k=\frac{k\pi}{n+1}. \]
For $(\epsilon,\phi)=(1,1)$, using sine addition/subtraction formulas, we see that equation \eqref{lambda_expr} is equivalent to 
\[ \sin(n\theta)(2\cos\theta-2)=0, \]
whose solutions in $(0,\pi)$ are $\theta_k=\frac{k\pi}{n}$, $k=1,\ldots,n-1$; moreover, equation \eqref{lambda_expr2} is satisfied. Since, by prosthaphaeresis formulas, 
\[ \sin(i\theta)-\sin((i-1)\theta)=2\sin\frac\theta2\cos\frac{(2i-1)\theta}{2}, \]
we conclude by Theorems~\ref{p(-2,2)} and~\ref{p=2} that, for $(\epsilon,\phi)=(1,1)$, a complete set of eigenpairs for $T_{n,\epsilon,\phi}=T_{n,1,1}$ is given by $(\lambda_k,\vv^{(k)})$, $k=0,\ldots,n-1$, with
\[ \lambda_k=2\cos\theta_k,\qquad\vv^{(k)}=\biggl[\cos\frac{(2i-1)\theta_k}{2}\biggr]_{i=1}^n,\qquad\theta_k=\frac{k\pi}n. \]
Similar derivations, using sine addition/subtraction and prosthaphaeresis formulas, can be done for all $\epsilon,\phi\in\{0,1,-1\}$; we leave the details to the reader.

\subsection[$\epsilon\phi=1$]{$\boldsymbol{\epsilon\phi=1}$}\label{prod1}
We focus in this section on the case $\epsilon\phi=1$, which is crucial for the applications presented in Section~\ref{sec:app}. To the best of the authors' knowledge, this case has never been addressed in the literature. 
Besides $\epsilon\phi=1$, we also assume that:
\begin{itemize}[nolistsep,leftmargin=*]
	\item $\epsilon,\phi>0$ (because no additional difficulties are encountered if $\epsilon,\phi<0$);
	\item $\epsilon,\phi\ne1$ (because the case $\epsilon=\phi=1$ has already been addressed in Section~\ref{01-1}).
\end{itemize}
Under these assumptions, we have
\[ \frac{\epsilon+\phi}{2}=\frac{\epsilon+\epsilon^{-1}}{2}=\cosh(\log\epsilon)>1. \]
Using sine addition/subtraction formulas, we see that equation \eqref{lambda_expr} is equivalent to
\[ \sin(n\theta)\Bigl(\cos\theta-\frac{\epsilon+\epsilon^{-1}}2\Bigr)=0, \]
whose solutions in $(0,\pi)$ are $\theta_k=\frac{k\pi}n$, $k=1,\ldots,n-1$. Thus, Theorem~\ref{p(-2,2)} yields $n-1$ eigenpairs of $T_{n,\epsilon,\phi}$, i.e., $(\lambda_k,\vv^{(k)})$, $k=1,\ldots,n-1$, with
\[ \lambda_k=2\cos\theta_k,\qquad\vv^{(k)}=\bigl[\sin(i\theta_k)-\epsilon\sin((i-1)\theta_k)\bigr]_{i=1}^n,\qquad\theta_k=\frac{k\pi}n. \]
We still have to find one eigenvalue, which can be neither $2$ nor $-2$ because, under our assumptions, equations \eqref{lambda_expr2} and \eqref{lambda_expr2'} are not satisfied. In other words, the eigenvalue we are looking for is an outlier. Since equation \eqref{lambda_exprh} is equivalent to
\[ \sinh(n\theta)\Bigl(\cosh\theta-\frac{\epsilon+\epsilon^{-1}}2\Bigr)=0, \]
it has a unique solution in $(0,\infty)$ given by $\theta=|\hspace{-1pt}\log\epsilon|$. We then obtain the outlier $\lambda$ and the corresponding eigenvector $\vv$ from Theorem~\ref{p(2,oo)}:
\[ \lambda=2\cosh\theta,\qquad\vv=\bigl[\sinh(i\theta)-\epsilon\sinh((i-1)\theta)\bigr]_{i=1}^n,\qquad\theta=|\hspace{-1pt}\log\epsilon|. \]
After straightforward manipulations, involving also a renormalization of $\vv$, we get for the outlier eigenpair $(\lambda,\vv)$ the following simplified expressions:
\[ \lambda=\epsilon+\epsilon^{-1}=\phi+\phi^{-1}=\epsilon+\phi,\qquad\vv=\bigl[\epsilon^{-i+1}\bigr]_{i=1}^n=\bigl[\phi^{i-1}\bigr]_{i=1}^n. \]
Note that this outlier eigenpair could also be obtained from property~\ref{vw} of Theorem~\ref{eig_props}.
In conclusion, if we set
\[ V_{n,\epsilon}=\bigl[\,\vv\ \big|\ \vv^{(1)}\ \big|\ \cdots\ \vv^{(n-1)}\,\bigr], \]
then the eigendecomposition of $T_{n,\epsilon,\phi}$ is given by
\[ T_{n,\epsilon,\phi}=V_{n,\epsilon}\begin{bmatrix}
\epsilon+\epsilon^{-1} & & & & \\
& 2\cos\frac\pi n & & & \\
& & 2\cos\frac{2\pi}n & & \\
& & & \ddots & \\
& & & & 2\cos\frac{(n-1)\pi}n 
\end{bmatrix}V_{n,\epsilon}^{-1}. \]

\section{Applications}\label{sec:app}

In this section, we present a few applications of our results in the context of Markov chains and processes. Section~\ref{queue} deals with a queuing model. Sections~\ref{RW1} and~\ref{RWd} are devoted to random walks in unidimensional and multidimensional lattices, respectively. Finally, Sections~\ref{mdp} and~\ref{sec:economics} focus on multidimensional reflected diffusion processes and related economics applications.

\subsection{Queuing Model}\label{queue}

Consider a continuous-time Markov chain with $n$ states $0,\ldots,n-1$ and with transition rate matrix (infinitesimal generator) given by
\begin{equation}\label{Qnlm}
Q_{n,\lambda,\mu}=\begin{bmatrix}
-\lambda & \lambda & & & \\[5pt]
\mu & -\lambda-\mu & \lambda & & \\[5pt]
& \ddots & \ddots & \ddots & \\[5pt]
& & \mu & -\lambda-\mu & \lambda \\[5pt]
& & & \mu & -\mu
\end{bmatrix},
\end{equation}
where $\lambda,\mu>0$. Markov chains of this kind are referred to as M/M/1/$K$ queues (with $K=n-1$). They find applications in queuing theory \cite{BiniCIME,Giambene,lawler2006introduction}, especially in telecommunications \cite[Section~5.7]{Giambene}.
In this section, we derive the eigendecomposition of 
\begin{equation*}
Q_{n,\lambda,\mu}^\top=\begin{bmatrix}
-\lambda & \mu & & & \\[5pt]
\lambda & -\lambda-\mu & \mu & & \\[5pt]
& \ddots & \ddots & \ddots & \\[5pt]
& & \lambda & -\lambda-\mu & \mu \\[5pt]
& & & \lambda & -\mu
\end{bmatrix}.
\end{equation*}
We begin with the following lemma, which can be proved by direct computation.

\begin{lemma}\label{ts}
Let
\[ T=\begin{bmatrix}
a_1 & b_1 & & & \\
c_1 & a_2 & b_2 & & \\
& c_2 & \ddots & \ddots & \\
& & \ddots & \ddots & b_{n-1}\\
& & & c_{n-1} & a_n
\end{bmatrix} \]
be a real tridiagonal matrix such that $b_ic_i>0$ for all $i=1,\ldots,n-1$. Then
\[ T=D\begin{bmatrix}
a_1 & \sqrt{b_1c_1} & & & \\[8pt]
\sqrt{b_1c_1} & a_2 & \sqrt{b_2c_2} & & \\[8pt]
& \sqrt{b_2c_2} & \ddots & \ddots & \\[8pt]
& & \ddots & \ddots & \sqrt{b_{n-1}c_{n-1}}\\[8pt]
& & & \sqrt{b_{n-1}c_{n-1}} & a_n
\end{bmatrix}D^{-1}, \]
where $\displaystyle D={\rm diag}\biggl(1,\sqrt{\frac{c_1}{b_1}},\sqrt{\frac{c_1c_2}{b_1b_2}},\ldots,\sqrt{\frac{c_1\cdots c_{n-1}}{b_1\cdots b_{n-1}}}\,\biggr)$. 
\end{lemma}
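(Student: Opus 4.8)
The plan is to verify the factorization directly by multiplying out $D M D^{-1}$, where $M$ denotes the symmetric tridiagonal matrix on the right-hand side, and checking entrywise that it equals $T$. Since $D$ is diagonal with strictly positive entries (well-defined precisely because $b_ic_i>0$ forces $c_i/b_i>0$), conjugation by $D$ preserves the tridiagonal structure and acts on each entry in a transparent way: if $D={\rm diag}(d_1,\ldots,d_n)$, then $(DMD^{-1})_{ij}=(d_i/d_j)M_{ij}$.

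First I would record the entries of $D$ explicitly as $d_1=1$ and $d_{k+1}=\sqrt{(c_1\cdots c_k)/(b_1\cdots b_k)}$ for $k=1,\ldots,n-1$, so that the ratio of consecutive entries is
\[ \frac{d_{i+1}}{d_i}=\sqrt{\frac{c_i}{b_i}},\qquad i=1,\ldots,n-1. \]
The diagonal entries are fixed by conjugation, so $(DMD^{-1})_{ii}=M_{ii}=a_i$, matching $T$ immediately. For the superdiagonal, I would compute
\[ (DMD^{-1})_{i,i+1}=\frac{d_i}{d_{i+1}}\,\sqrt{b_ic_i}=\sqrt{\frac{b_i}{c_i}}\,\sqrt{b_ic_i}=b_i, \]
and for the subdiagonal, symmetrically,
\[ (DMD^{-1})_{i+1,i}=\frac{d_{i+1}}{d_i}\,\sqrt{b_ic_i}=\sqrt{\frac{c_i}{b_i}}\,\sqrt{b_ic_i}=c_i. \]
This recovers exactly the off-diagonal entries of $T$, completing the verification.

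There is no real obstacle here; the only point requiring a moment's care is confirming that $D$ is well-defined and invertible, which follows from the hypothesis $b_ic_i>0$ guaranteeing each quotient $c_i/b_i$ is positive, so every square root is real and nonzero. The computation is a routine telescoping of the ratios $d_{i+1}/d_i$, and the symmetry of $M$ together with the sign-respecting split of $\sqrt{b_ic_i}$ into $\sqrt{b_i/c_i}\cdot\sqrt{b_ic_i}$ and $\sqrt{c_i/b_i}\cdot\sqrt{b_ic_i}$ is what produces $b_i$ and $c_i$ in the correct positions. I would present the argument as the three entrywise checks above and remark that the diagonal similarity makes $T$ and $M$ isospectral, which is the feature exploited in the subsequent application to $Q_{n,\lambda,\mu}^\top$.
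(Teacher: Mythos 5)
Your verification is correct and is exactly the ``direct computation'' that the paper invokes for Lemma~\ref{ts} without writing out, so the two arguments coincide. One small caveat worth recording: the cancellation $\sqrt{b_i/c_i}\,\sqrt{b_ic_i}=b_i$ (and its counterpart for $c_i$) uses $b_i>0$, not merely $b_ic_i>0$ --- if $b_i,c_i<0$ the product equals $|b_i|=-b_i$ --- so strictly both the lemma as stated and your entrywise check require $b_i,c_i>0$ individually, which is what holds in the paper's application to $Q_{n,\lambda,\mu}^\top$ with $\lambda,\mu>0$.
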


By applying Lemma~\ref{ts} to the matrix $Q_{n,\lambda,\mu}^\top$, we obtain
\begin{equation*}
Q_{n,\lambda,\mu}^\top=D_{n,\lambda,\mu}X_{n,\lambda,\mu}D_{n,\lambda,\mu}^{-1},
\end{equation*}
where 
\begin{align*}
D_{n,\lambda,\mu}&={\rm diag}(1,\tau,\tau^2,\ldots,\tau^{n-1}),\qquad\tau=\sqrt{\frac\lambda\mu},\\
X_{n,\lambda,\mu}&=\begin{bmatrix}
-\lambda & \sqrt{\lambda\mu} & & & \\[5pt]
\sqrt{\lambda\mu} & -\lambda-\mu & \sqrt{\lambda\mu} & & \\[5pt]
& \ddots & \ddots & \ddots & \\[5pt]
& & \sqrt{\lambda\mu} & -\lambda-\mu & \sqrt{\lambda\mu}\\[5pt]
& & & \sqrt{\lambda\mu} & -\mu
\end{bmatrix}.
\end{align*}
A direct verification shows that
\begin{equation*}
X_{n,\lambda,\mu}=(-\lambda-\mu)I_n+\sqrt{\lambda\mu}\,T_{n,\epsilon,\phi}\qquad{\rm with}\qquad\begin{cases}\epsilon=\tau^{-1}=\sqrt{\dfrac\mu\lambda},\\[10pt] \phi=\tau=\sqrt{\dfrac\lambda\mu}.\end{cases}
\end{equation*}
Since $\epsilon\phi=1$, the eigendecomposition of $X_{n,\lambda,\mu}$ (and hence also of $Q_{n,\lambda,\mu}^\top$) is immediately obtained from the results in Section~\ref{prod1}.
In particular, the eigenpairs of $Q_{n,\lambda,\mu}^\top$ are given by $(\nu_k,\ww_k)$, $k=0,\ldots,n-1$, where
\begin{alignat}{5}
\nu_0&=0, &\qquad\ww_0&=\bigl[1,\tau^2,\tau^4,\ldots,\tau^{2n-2}\bigr]^\top,\label{nu0w0}
\end{alignat}
and, for $k=1,\ldots,n-1$,
\begin{alignat}{5}
\nu_k&=-\lambda-\mu+2\sqrt{\lambda\mu}\cos\theta_k, &\qquad\ww_k&=\bigl[\tau^{i-1}\sin(i\theta_k)-\tau^{i-2}\sin((i-1)\theta_k)\bigr]_{i=1}^n, &\qquad\theta_k&=\frac{k\pi}n.\label{nukwk}
\end{alignat}

\begin{remark}[\textbf{Steady-State Distribution}]
Since
\[ \|\ww_0\|_1=\sum_{i=0}^{n-1}\tau^{2i}=\frac{1-\tau^{2n}}{1-\tau^2}=\frac{1-\rho^n}{1-\rho},\qquad\rho=\tau^2, \]
the steady-state (or stationary/limiting) distribution of the considered queuing model, i.e., the normalized positive eigenvector of $Q_{n,\lambda,\mu}^\top$ associated with the eigenvalue~0, is given by
\[ \frac{\ww_0}{\|\ww_0\|_1} = \frac{1-\rho}{1-\rho^n}\bigl[1,\rho,\rho^2,\ldots,\rho^{n-1}\bigr]^\top, \]
where it is understood that in the case $\rho=1$ we take the limit $\rho\to1$. For a different derivation of this result, see \cite[Section~5.7]{Giambene}.
\end{remark}

\begin{remark}[\textbf{Second Eigenvalue}]\label{2nd_eig}
It is clear from \eqref{nukwk} and the geometric-arithmetic mean inequality $\sqrt{\lambda\mu}\le\frac12(\lambda+\mu)$ that all nonzero eigenvalues of $Q_{n,\lambda,\mu}^\top$ are negative. The largest of them, i.e., the second largest eigenvalue after 0, is $\nu_1=-\lambda-\mu+2\sqrt{\lambda\mu}\cos\frac\pi n$. The second eigenvalue gives information about the convergence speed towards the steady-state distribution of power methods \cite[p.~371]{Bini}; see also \cite{gabaix2009power} and \cite[Section~7.2]{lawler2006introduction}. We will return to the role of the second eigenvalue in Section~\ref{sec:economics}.
\end{remark}

\begin{remark}
The above derivation of the eigendecomposition of $Q_{n,\lambda,\mu}^\top$ requires only the hypothesis $\lambda\mu>0$. In other words, the eigendecomposition of $Q_{n,\lambda,\mu}^\top$ is given by \eqref{nu0w0}--\eqref{nukwk} for all $\lambda,\mu\in\mathbb R$ such that $\lambda\mu>0$.
\end{remark}

\subsection{Random Walk in a Unidimensional Lattice}\label{RW1}

\begin{figure}
\centering
\includegraphics[width=0.40\textwidth]{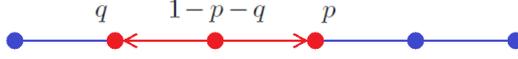}
\caption{Random walk in a unidimensional lattice.}
\label{RandomWalkGraph1D}
\end{figure}

Consider a discrete-time Markov chain with $n$ states $1,\ldots,n$ and with matrix of transition probabilities given by
\begin{equation}\label{Pnpq}
P_{n,p,q}=\begin{bmatrix}
1-p & p & & & \\[5pt]
q & 1-p-q & p & & \\[5pt]
& \ddots & \ddots & \ddots & \\[5pt]
& & q & 1-p-q & p \\[5pt]
& & & q & 1-q
\end{bmatrix},
\end{equation}
where $p,q>0$ and $p+q\le1$.
Markov chains of this kind are often referred to as random walks in the unidimensional lattice $\{1,\ldots,n\}$; see Figure~\ref{RandomWalkGraph1D}. 
The difference with respect to traditional random walks in $\mathbb Z$ is that states~1 and~$n$ act as absorbing/reflecting barriers: when the system is in state~1, it cannot go to a hypothetical previous state 0 with probability $q$ (as it happens for all other states $2,\ldots,n$), because the probability $q$ of going to a previous state~0 is absorbed in the probability of staying in state~1, which grows from $1-p-q$ to $1-p$; a similar discussion applies to state~$n$.

In this section, we derive the eigendecomposition of
\[ P_{n,p,q}^\top=\begin{bmatrix}
1-p & q & & & \\[5pt]
p & 1-p-q & q & & \\[5pt]
& \ddots & \ddots & \ddots & \\[5pt]
& & p & 1-p-q & q \\[5pt]
& & & p & 1-q
\end{bmatrix}. \]
To this end, simply note that
\[ P_{n,p,q}^\top = I_n+Q_{n,p,q}^\top, \]
where $Q_{n,p,q}$ is given by \eqref{Qnlm} for $(\lambda,\mu)=(p,q)$. By \eqref{nu0w0}--\eqref{nukwk}, the eigenpairs of $P_{n,p,q}^\top$ are given by $(\mu_k,\ww_k)$, $k=0,\ldots,n-1$, where
\begin{alignat}{5}
\mu_0&=1, &\qquad\ww_0&=\bigl[1,\alpha^2,\alpha^4,\ldots,\alpha^{2n-2}\bigr]^\top,\label{mu0w0}
\end{alignat}
and, for $k=1,\ldots,n-1$,
\begin{alignat}{5}
\mu_k&=1-p-q+2\sqrt{pq}\cos\theta_k, &\qquad\ww_k&=\bigl[\alpha^{i-1}\sin(i\theta_k)-\alpha^{i-2}\sin((i-1)\theta_k)\bigr]_{i=1}^n, &\qquad\theta_k&=\frac{k\pi}n,\label{mukwk}
\end{alignat}
with
\[ \alpha=\sqrt{\frac pq}. \]

\begin{remark}[\textbf{Steady-State Distribution}]
Since
\[ \|\ww_0\|_1=\sum_{i=0}^{n-1}\alpha^{2i}=\frac{1-\alpha^{2n}}{1-\alpha^2}=\frac{1-\beta^n}{1-\beta},\qquad\beta=\alpha^2, \]
the steady-state distribution of the unidimensional random walk, i.e., the normalized positive eigenvector of $P_{n,p,q}^\top$ associated with the eigenvalue~1, is given by
\[ \frac{\ww_0}{\|\ww_0\|_1} = \frac{1-\beta}{1-\beta^n}\bigl[1,\beta,\beta^2,\ldots,\beta^{n-1}\bigr]^\top, \]
where it is understood that in the case $\beta=1$ we take the limit $\beta\to1$.
\end{remark}

\subsection{Random Walk in a Multidimensional Lattice}\label{RWd}

\begin{figure}
\centering
\includegraphics[width=0.60\textwidth]{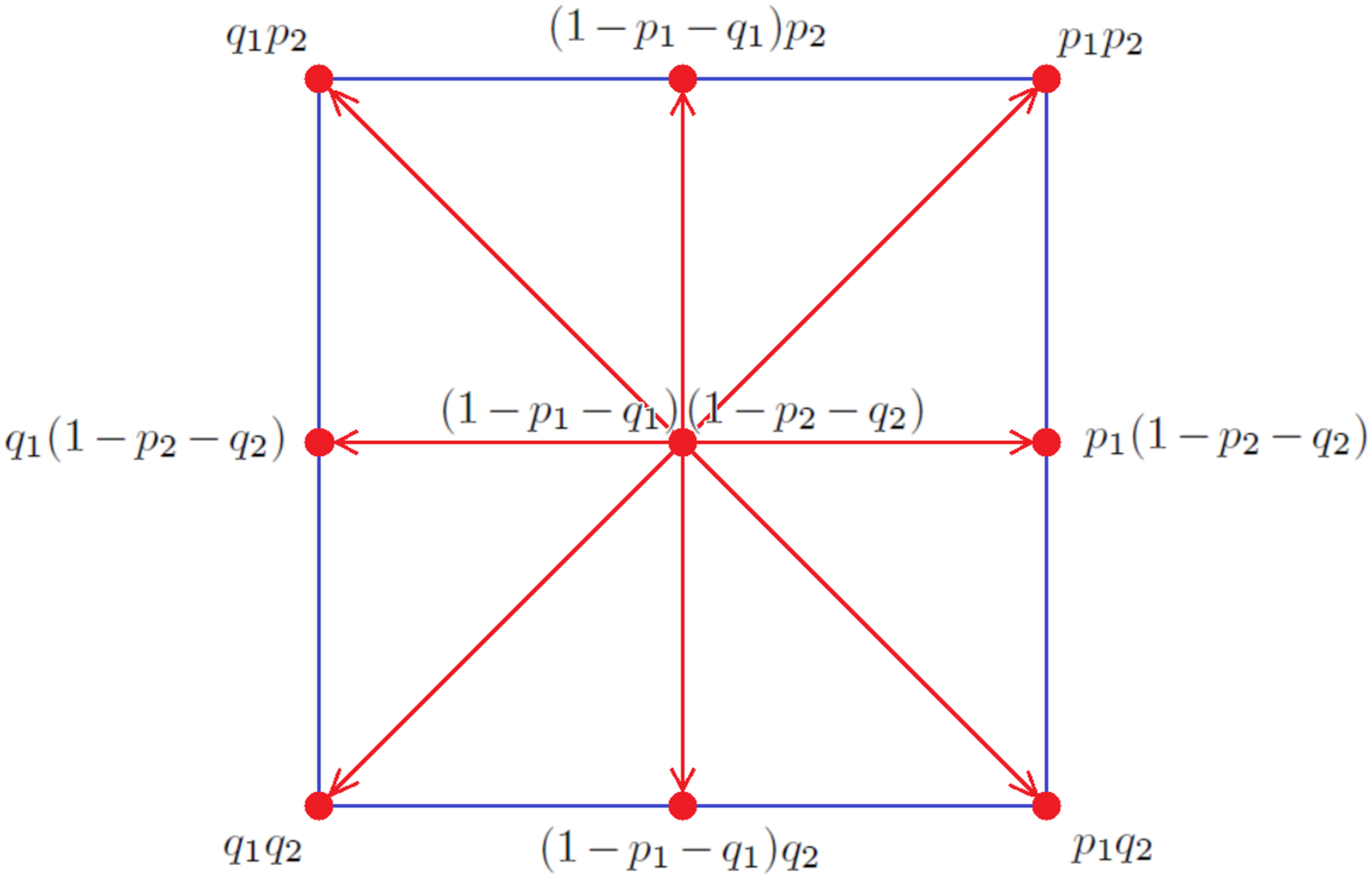}
\caption{Random walk in a bidimensional lattice.}
\label{RandomWalkGraphMultiD}
\end{figure}

Let $\nn=(n_1,\ldots,n_d)\in\mathbb N^d$ and $N(\nn)=\prod_{i=1}^dn_i$. We denote by $\{\mathbf1,\ldots,\nn\}$ the multi-index range $\{\ii\in\mathbb N^d:\:\mathbf1\le\ii\le\nn\}$, where $\mathbf1=(1,\ldots,1)$ and inequalities between vectors such as $\ii\le\nn$ must be interpreted componentwise. When writing $\ii=\mathbf1,\ldots,\nn$, we mean that $\ii$ varies from $\mathbf1$ to $\nn$ over the multi-index range $\{\mathbf1,\ldots,\nn\}$ following the standard lexicographic ordering:
\[ \Bigl[\ \ldots\ \bigl[\ [\:(i_1,\ldots,i_d)\:]_{i_d=1,\ldots,n_d}\ \bigr]_{i_{d-1}=1,\ldots,n_{d-1}}\ \ldots\ \Bigr]_{i_1=1,\ldots,n_1}. \]
We refer the reader to \cite[Section~2.1.2]{GLTbookII} for more details on the multi-index notation.

Consider a discrete-time Markov chain with $N(\nn)$ states $\mathbf1,\ldots,\nn$ and with matrix of transition probabilities
\[ P_{\nn,\pp,\qq}=\bigotimes_{r=1}^dP_{n_r,p_r,q_r}, \]
where 
\begin{itemize}[leftmargin=*,nolistsep]
	\item $\pp=(p_1,\ldots,p_d)$ and $\qq=(q_1,\ldots,q_d)$ satisfy $\pp,\qq>\mathbf0$ and $\pp+\qq\le\mathbf1$, 
	\item the matrix $P_{n_r,p_r,q_r}$ is defined by \eqref{Pnpq} for $(n,p,q)=(n_r,p_r,q_r)$,
	\item $\bigotimes$ denotes the tensor (Kronecker) product.
\end{itemize}
Markov chains of this kind are often referred to as random walks in the $d$-dimensional lattice $\{\mathbf1,\ldots,\nn\}$.
They are a generalization of the unidimensional random walks discussed in Section~\ref{RW1}.
By the properties of tensor products \cite[Section~2.5]{GLTbookII}, for all $\ii,\jj=\mathbf1,\ldots,\nn$, the probability of going from state~$\ii$ to state~$\jj$ is given by
\[ (P_{\nn,\pp,\qq})_{\ii\jj}=\prod_{r=1}^d(P_{n_r,p_r,q_r})_{i_rj_r}, \]
and it is equal to the product for $r=1,\ldots,d$ of the probability of going from state~$i_r$ to state~$j_r$ in a unidimensional random walk with transition matrix $P_{n_r,p_r,q_r}$ as considered in Section~\ref{RW1}. In short, a $d$-dimensional random walk is the result of $d$ independent unidimensional random walks (one for each space dimension); see Figure~\ref{RandomWalkGraphMultiD} for a bidimensional illustration.

By the properties of tensor products and the results of Section~\ref{RW1}, we can immediately obtain the eigendecomposition of $P_{\nn,\pp,\qq}^\top$. In particular, the eigenpairs of $P_{\nn,\pp,\qq}^\top$ are given by $(\mu_\kk,\ww_\kk)$, $\kk=\mathbf0,\ldots,\nn-\mathbf1$, where
\[ \mu_\kk=\prod_{r=1}^d\mu_{k_r},\qquad\ww_\kk=\bigotimes_{r=1}^d\ww_{k_r}, \]
and $(\mu_{k_r},\ww_{k_r})$ is defined by \eqref{mu0w0}--\eqref{mukwk} for $(k,n,p,q,\alpha)=(k_r,n_r,p_r,q_r,\alpha_r)$ with $\alpha_r=\sqrt{p_r/q_r}$. 

\begin{remark}[\textbf{Steady-State Distribution}]
The steady-state distribution of the $d$-dimensional random walk is given by
\[ \frac{\ww_{\mathbf0}}{\|\ww_{\mathbf0}\|_1}=\bigotimes_{r=1}^d\frac{1-\beta_r}{1-\beta_r^{n_r}}\bigl[1,\beta_r,\beta_r^2,\ldots,\beta_r^{n_r-1}\bigr]^\top,\qquad\beta_r=\alpha_r^2, \]
i.e., it is the tensor product of the steady-state distributions of the individual unidimensional random walks that compose it.
\end{remark}

\subsection{Multidimensional Diffusion Processes}\label{mdp}
Consider a $d$-dimensional diffusion process, where the diffusions in each dimension are independent of each other and subject to a reflecting boundary condition at each side.
We assume for simplicity that, for every $r=1,\ldots,d$, the direction $x_r$ is discretized uniformly with $n_r$ nodes separated by a discretization step $\Delta_r>0$.
This discretization gives rise to a $n_1\times\cdots\times n_d$ lattice whose points $\xx_\ii$ are naturally indexed by a multi-index $\ii=\mathbf1,\ldots,\nn$, with $\nn=(n_1,\ldots,n_d)$.
The diffusion in direction $x_r$ is a Brownian motion characterized by two parameters: a drift $\mu_r\in\mathbb R$ and a variance $\sigma_r^2>0$.
For the direction $x_r$, the infinitesimal generator $L_{n_r,\mu_r,\sigma_r}$ coincides with the generator of a 1-dimensional diffusion process with drift $\mu_r$ and variance $\sigma_r^2$ discretized uniformly with $n_r$ nodes separated by a discretization step $\Delta_r$. In formulas, $L_{n_r,\mu_r,\sigma_r}$ is an $n_r\times n_r$ matrix, which in the case $\mu_r\le0$ is given by
\begin{align*}
L_{n_r,\mu_r,\sigma_r} &= \frac{\mu_r}{\Delta_r}\left[\begin{array}{rrrrr}
0 & & & & \\
-1 & 1 & & & \\
 & -1 & 1 &  & \\
& & \ddots & \ddots & \\
& & & -1 & 1
\end{array}\right] +
\frac{\sigma_r^2}{2\Delta_r^2}\left[\begin{array}{rrrrr}
-1 & 1 & & & \\
1 & -2 & 1 & & \\
& \ddots & \ddots & \ddots & \\
& & 1 & -2 & 1 \\
& & & 1 & -1 \\
\end{array}\right]=Q_{n_r,\tilde\lambda_r,\tilde\mu_r},
\end{align*}
where $\displaystyle\tilde\lambda_r=\frac{\sigma_r^2}{2\Delta_r^2}$, $\displaystyle\tilde\mu_r=\frac{\sigma_r^2}{2\Delta_r^2}-\frac{\mu_r}{\Delta_r}$, and $Q_{n,\lambda,\mu}$ is defined in \eqref{Qnlm}; and in the case $\mu_r\ge0$ is given by
\begin{align*}
L_{n_r,\mu_r,\sigma_r} &= \frac{\mu_r}{\Delta_r}\left[\begin{array}{rrrrr}
-1 & 1& & & \\
 & -1 & 1& & \\
 & & \ddots & \ddots  & \\
& &  & -1 & 1 \\
& & &  & 0
\end{array}\right] +
\frac{\sigma_r^2}{2\Delta_r^2}\left[\begin{array}{rrrrr}
-1 & 1 & & & \\
1 & -2 & 1 & & \\
& \ddots & \ddots & \ddots & \\
& & 1 & -2 & 1 \\
& & & 1 & -1 \\
\end{array}\right]=Q_{n_r,\tilde\lambda_r,\tilde\mu_r},
\end{align*}
where $\displaystyle\tilde\lambda_r=\frac{\sigma_r^2}{2\Delta_r^2}+\frac{\mu_r}{\Delta_r}$, $\displaystyle\tilde\mu_r=\frac{\sigma_r^2}{2\Delta_r^2}$, and $Q_{n,\lambda,\mu}$ is defined in \eqref{Qnlm}. In short,
\[ L_{n_r,\mu_r,\sigma_r}=Q_{n_r,\tilde\lambda_r,\tilde\mu_r},\qquad\tilde\lambda_r=\left\{\begin{aligned}&\frac{\sigma_r^2}{2\Delta_r^2}, &&&\mbox{if $\mu_r\le0$},\\ &\frac{\sigma_r^2}{2\Delta_r^2}+\frac{\mu_r}{\Delta_r}, &&&\mbox{if $\mu_r\ge0$},\end{aligned}\right.\qquad\tilde\mu_r=\left\{\begin{aligned}&\frac{\sigma_r^2}{2\Delta_r^2}-\frac{\mu_r}{\Delta_r}, &&&\mbox{if $\mu_r\le0$},\\ &\frac{\sigma_r^2}{2\Delta_r^2}, &&&\mbox{if $\mu_r\ge0$}.\end{aligned}\right. \]
The differential operator (infinitesimal generator) of the $d$-dimensional diffusion process is given by
\begin{align}\label{eq:diffusion-generator}
L_{\nn,\bmu,\bsigma} &= \sum_{i=1}^d I_{n_1}\otimes\cdots\otimes I_{n_{i-1}}\otimes L_{n_r,\mu_r,\sigma_r}\otimes I_{n_{i+1}}\otimes\cdots\otimes I_{n_d},
\end{align}
where $\bmu=(\mu_1,\ldots,\mu_d)$ and $\bsigma=(\sigma_1,\ldots,\sigma_d)$. More details on the discretized multidimensional diffusion process considered here will be given in Section~\ref{sec:economics} along with an economics application; for more on diffusion processes, see \cite{Baldi} for a mathematical treatment and \cite{achdou2014partial,achdou2017income,gabaix2009power} for an economical application-oriented approach.

By the properties of tensor products and the results of Section~\ref{queue}, we can immediately obtain the eigendecomposition of $L_{\nn,\bmu,\bsigma}^\top$. In particular, the eigenpairs of $L_{\nn,\bmu,\bsigma}^\top$ are given by $(\nu_\kk,\ww_\kk)$, $\kk=\mathbf0,\ldots,\nn-\mathbf1$, where
\begin{equation}\label{L-eigenpairs}
\nu_\kk=\sum_{r=1}^d\nu_{k_r},\qquad\ww_\kk=\bigotimes_{r=1}^d\ww_{k_r},
\end{equation}
and $(\nu_{k_r},\ww_{k_r})$ is defined by \eqref{nu0w0}--\eqref{nukwk} for $(k,n,\lambda,\mu,\tau)=(k_r,n_r,\tilde\lambda_r,\tilde\mu_r,\tilde\tau_r)$ with $\tilde\tau_r=\sqrt{\tilde\lambda_r/\tilde\mu_r}$.

\begin{remark}[\textbf{Steady-State Distribution}]
The steady-state distribution of the $d$-dimensional diffusion process generated by $L_{\nn,\bmu,\bsigma}$, i.e., the normalized positive eigenvector of $L_{\nn,\bmu,\bsigma}^\top$ associated with the eigenvalue $0$, is given by
\begin{equation}\label{ssd}
\ppp = \frac{\ww_{\mathbf0}}{\|\ww_{\mathbf0}\|_1}=\bigotimes_{r=1}^d\frac{1-\tilde\rho_r}{1-\tilde\rho_r^{n_r}}\bigl[1,\tilde\rho_r,\tilde\rho_r^2,\ldots,\tilde\rho_r^{n_r-1}\bigr]^\top=\bigotimes_{r=1}^d\ppp_r,\qquad\tilde\rho_r=\tilde\tau_r^2,
\end{equation}
i.e., it is the tensor product of the steady-state distributions $\ppp_r$ of the individual unidimensional diffusion processes generated by the operators $L_{n_r,\mu_r,\sigma_r}$, $r=1,\ldots,d$.
\end{remark}

\subsection{Dynamics of Wealth and Income Inequality}\label{sec:economics}
In this section, we present an economic application of the results obtained in Section~\ref{mdp}. We begin with an overview of the topic, which may not be so familiar to non-economists.

\subsubsection{Modeling the Evolution of Wealth and Income}\label{sec:mod}
The sources of the vast wealth and income inequality is a key topic of study within macroeconomics and finance; see \cite{achdou2014partial,achdou2017income,atkinson2011top,benhabib2018skewed,benhabib2019wealth} for empirical evidence and modeling approaches. Central to the questions of inequality are:
\begin{itemize}[nolistsep,leftmargin=*]
	\item what is the source of heterogeneity that drives the stationary distribution of income or wealth?
	\item how would the income or wealth distribution evolve over time given aggregate changes?  
\end{itemize}
For example, researchers can ask how the stationary distribution of wealth will change---and how long it will take to be reached---given experiments such as a new income tax, technological changes driving more volatile wages, or increases in the returns on an asset such as housing.
Methodologically, the analysis of income inequality is done through examining the stationary distribution of discrete- or continuous-time stochastic processes associated with income or wealth. Typically, researchers act as follows.
\begin{itemize}[leftmargin=*,nolistsep]
	\item They choose a stochastic process for the assets of interest (for example, housing wealth, human wealth (i.e., wages), stocks, bonds, social security income, etc.). 
	\item They use data to estimate the parameters of the stochastic process for that ``portfolio'' of assets; see \cite{achdou2014partial} for a survey intended to bridge the continuous-time versions of these models. In some cases, the parameters are derived from optimal control of a Hamilton--Jacobi--Bellman equation \cite{achdou2014partial,achdou2017income,benhabib2019wealth}.
	\item They solve for the stationary distribution associated with the stochastic process. In this way, they can examine properties of the distribution, relate it back to the data, and conduct hypotheticals on the impact of policy.
\end{itemize}
With this approach, the emphasis on the steady-state distribution has come out of necessity. Even the speed of convergence towards the steady state has recently become an active research field; see \cite{gabaix2016dynamics} for a theory of the convergence rates largely focused on infinite-dimensional univariate models, and \cite{luttmer2007selection} for earlier evidence and theory on transition rates of the firm size distribution (methodologically, much of the literature on income/wealth inequality is similar to the firm dynamics literature, where the goal is to understand the distribution of firm sizes or productivity as well as the role of firm or worker heterogeneity in generating that distribution \cite{gabaix2009power,luttmer2007selection,luttmer2012slow}).

\subsubsection{Continuous-State Formulation}\label{csf}
Consider a portfolio of $d$ assets $\XX(t)=(X_1(t),\ldots,X_d(t))$ (e.g., housing wealth, wage income, social security, etc.). We emphasize the dependence on $t$ because the assets evolve over time. We assume that $X_1(t),\ldots,X_d(t)$ are $d$ independent Brownian motions with drifts $\mu_1,\ldots,\mu_d$ and variances $\sigma_1^2,\ldots,\sigma_d^2$.
Without loss of generality, we also assume that $X_1(t),\ldots,X_d(t)$ take values in $[0,1]$, so that the portfolio $\XX(t)$ determining an individual's wealth is an element $\xx=(x_1,\ldots,x_d)\in[0,1]^d$ at any time $t$.
The resulting stochastic process for the considered set of assets is a $d$-dimensional Brownian motion with drifts $\bmu=(\mu_1,\ldots,\mu_d)$ and variances $\bsigma^2=(\sigma_1^2,\ldots,\sigma_d^2)$, and with the edges of the hypercube $[0,1]^d$ acting as reflecting barriers.
The probability density function $p_r(x_r,t)$ for the asset $X_r(t)$ at time $t$ is determined by the Kolmogorov forward equation (Fokker--Planck equation)
\begin{align}
\frac{\partial p_r}{\partial t}(x_r,t) &= - \mu_r\frac{\partial p_r}{\partial x_r}(x_r,t) + \frac{\sigma_r^2}{2}\,\frac{\partial^2p_r}{\partial x_r^2}(x_r,t)\label{eq:pde1}
\end{align}
subject to the boundary conditions induced by reflecting boundaries at $0$ and $1$:
\begin{align}
0 &= -\mu_r p_r(x_r,t) + \frac{\sigma_r^2}{2}\,\frac{\partial p_r}{\partial x_r}(x_r,t),\qquad x_r=0,1.\label{eq:pde2}
\end{align}
The objects of interest are the following. 
\begin{itemize}[leftmargin=*,nolistsep]
	\item The stationary density function $p_r(x_r)$, that is, the density function independent of $t$ satisfying \eqref{eq:pde1}--\eqref{eq:pde2}. The function $p_r(x_r)$ does not evolve over time and determines the limiting (equilibrium) density function $p(\xx)=p_1(x_1)\cdots p_d(x_d)$ characterizing the steady-state probability distribution of the process.
	\item Any function $W$ that maps a state $\xx\in[0,1]^d$ to a scalar ``wealth'' or ``payoff'' $W(\xx)$.\,\footnote{\,As an example in the case $d=2$, asset $X_1(t)$ could be housing wealth at time $t$ and asset $X_2(t)$ could be bank holdings at time $t$ in an individual's portfolio. If $w_1$ is the per-unit value of a house and $w_2$ the per-unit value of a bank holding, then the ``wealth'' of an individual in state $(X_1,X_2)=(x_1,x_2)$ is $W(x_1,x_2)=w_1x_1+w_2x_2$.} Clearly, $W(\XX(t))$ is a random variable evolving over time together with the portfolio $\XX(t)$, and we are interested in quantities like the average wealth $\mathbb E[W(\XX)]$ and the wealth variance ${\rm Var}[W(\XX)]$ computed in the steady-state distribution $p(\xx)$, that is,
\begin{align*}
\mathbb E[W(\XX)]&=\int_{[0,1]^2}W(\xx)p(\xx){\rm d}\xx,\\
{\rm Var}[W(\XX)]&=\mathbb E[W(\XX)^2]-\mathbb E[W(\XX)]^2=\int_{[0,1]^2}W(\xx)^2p(\xx){\rm d}\xx-\biggl(\int_{[0,1]^2}W(\xx)p(\xx){\rm d}\xx\biggr)^2.
\end{align*}
\end{itemize}

\subsubsection{Discrete-State Formulation}\label{dsf}
Suppose we discretize the hypercube $[0,1]^d$ by introducing a $n_1\times\cdots\times n_d$ lattice with $n_r$ points in direction $x_r$ separated by a discretization step $\Delta_r>0$, as in Section~\ref{mdp}. This essentially means that we allow each random variable (asset) $X_r(t)$ to assume only a finite number of values.
Consequently, the portfolio $\XX(t)=(X_1(t),\ldots,X_r(t))$ can only be in a finite number of states $\xx_{\mathbf1},\ldots,\xx_\nn$.
The use of upwind finite differences allow us to convert the $2d$ PDEs \eqref{eq:pde1}--\eqref{eq:pde2} to a unique system of ODEs
\begin{align}
\frac{{\rm d}\ppp}{{\rm d}t}(t)&= L_{\nn,\bmu,\bsigma}^\top\mathbf{p}(t)\label{eq:p-ode}
\end{align}
subject to an initial condition $\ppp(0)$, where $L_{\nn,\bmu,\bsigma}$ is the infinitesimal generator \eqref{eq:diffusion-generator} and $\ppp_\ii(t)$ is the probability that the portfolio $\XX(t)$ is in state $\xx_\ii$ at time $t$. After this discretization, the continuous-state continuous-time Markov process of Section~\ref{csf} is changed into a discrete-state continuous-time Markov chain. Here, the objects of interest are the discrete counterparts of those mentioned in Section~\ref{csf}, i.e., the following.
\begin{itemize}[leftmargin=*,nolistsep]
	\item The stationary distribution $\ppp=(p_{\mathbf1},\ldots,p_\nn)$ of the process, that is, the probability vector independent of $t$ satisfying \eqref{eq:p-ode}. Clearly, $\ppp$ is the normalized positive eigenvector of $L_{\nn,\bmu,\bsigma}^\top$ associated with the zero eigenvalue and is given by \eqref{ssd}.
	\item Any function $W$ that maps a state $\xx_\ii\in[0,1]^d$ to a scalar ``wealth'' or ``payoff'' $W(\xx_\ii)=W_\ii$. Clearly, $W(\XX(t))$ is a random variable evolving over time together with the portfolio $\XX(t)$, and we are interested in quantities like the average wealth $\mathbb E[W(\XX)]$ and the wealth variance ${\rm Var}[W(\XX)]$ computed in the steady-state distribution $\ppp$, that is,
	\begin{align}
	\mathbb E[W(\XX)]&=\WW\cdot\ppp,\label{EW}\\
	{\rm Var}[W(\XX)]&=\mathbb E[W(\XX)^2]-\mathbb E[W(\XX)]^2=\WW^2\cdot\ppp-(\WW\cdot\ppp)^2,\label{VarW}
	\end{align}
	where $\WW=(W_{\mathbf1},\ldots,W_\nn)$ is the vector (tensor) of payoffs and $\WW^2$ is the componentwise square of $\WW$ (in general, operations on vectors that have no meaning in themselves must be interpreted in the componentwise sense).
\end{itemize}
Considering that $\ppp$ is known from \eqref{ssd}, formulas \eqref{EW}--\eqref{VarW} allow us to compute both the average wealth and the wealth variance in the steady state of the process. This lets us analyze different hypothetical scenarios. For example, if the drift $\mu_1$ of the housing component of an individual's portfolio increases, what would the impact be on the average wealth? Alternatively, we could ask how the wealth variance (a simple measure of inequality) would change if the variance of wages increases.

\subsubsection{Convergence Speed to the Steady State}
The results of Section~\ref{mdp} allow us to quantify the convergence speed to the steady state of the Markov chain presented in Section~\ref{dsf}.
Indeed, as we know from Section~\ref{mdp}, all nonzero eigenvalues of $L_{\nn,\bmu,\bsigma}$ are negative and the largest of them, i.e., the second largest eigenvalue after~0, is given by
\begin{equation}\label{2nd-eig}
\nu=\max_{r=1,\ldots,d}\Bigl(-\tilde\lambda_r-\tilde\mu_r+2\sqrt{\tilde\lambda_r\tilde\mu_r}\cos\frac\pi{n_r}\Bigr).
\end{equation}
The second eigenvalue provides a measure of the convergence speed towards the steady state. The reason is the following: for essentially every choice of the initial distribution $\ppp(0)$, the quantities $\ppp(t)$, $\mathbb E[W(\XX(t))]$, ${\rm Var}[W(\XX(t))]$ converge to their stationary counterparts $\ppp$, $\mathbb E[W(\XX)]$, ${\rm Var}[W(\XX)]$ in \eqref{ssd}, \eqref{EW}, \eqref{VarW} with asymptotic convergence rates given by
\begin{align}
\lim_{t\to\infty}\frac{{\rm d}}{{\rm d}t}\ln\|\ppp(t)-\ppp\|_2&=\nu,\label{p_rate}\\
\lim_{t\to\infty}\frac{{\rm d}}{{\rm d}t}\ln|\mathbb E[W(\XX(t))]-\mathbb E[W(\XX)]|&=\nu,\label{EW_rate}\\
\lim_{t\to\infty}\frac{{\rm d}}{{\rm d}t}\ln|{\rm Var}[W(\XX(t))]-{\rm Var}[W(\XX)]|&=\nu.\label{VarW_rate}
\end{align}
For more details on the role of the second eigenvalue as a measure of the asymptotic convergence rate towards the steady state, see, e.g., \cite{gabaix2009power} and \cite[Section~7.2]{lawler2006introduction}.

\subsubsection{Derivatives with Respect to Drifts and Variances}\label{sec:p-derivs}
For the convenience of economists, we here report the derivatives of the steady-state distribution $\ppp$ in \eqref{ssd}, the average wealth $\mathbb E[W(\XX)]$ in \eqref{EW}, and the wealth variance ${\rm Var}[W(\XX)]$ in \eqref{VarW} with respect to the drifts $\bmu$ and the variances $\bsigma^2$.
For $r=1,\ldots,d$, we have
{\allowdisplaybreaks\begin{align}
\frac{\partial\tilde\rho_r}{\partial\mu_r}&=\left\{\begin{aligned}&\frac{\sigma_r^2}{2\Delta_r^3\tilde\mu_r^2}=\frac{2\Delta_r\sigma_r^2}{(\sigma_r^2-2\Delta_r\mu_r)^2}, &&&\mbox{if $\mu_r\le0$},\\ &\frac{1}{\Delta_r\tilde\mu_r} = \frac{2\Delta_r}{\sigma_r^2}, &&&\mbox{if $\mu_r\ge0$},\end{aligned}\right.\notag\\[10pt]
\frac{\partial\tilde\rho_r}{\partial\sigma_r^2}&=-\frac{\mu_r}{2\Delta_r^3\tilde\mu_r^2},\notag\\[10pt]
\frac{\partial\ppp_r}{\partial\tilde\rho_r}&=\frac{(1-n_r)\tilde\rho_r^{n_r}+n_r\tilde\rho_r^{n_r-1}-1}{(1-\tilde\rho_r^{n_r})^2}\bigl[1,\tilde\rho_r,\tilde\rho_r^2,\ldots,\tilde\rho_r^{n_r-1}\bigr]^\top+\frac{1-\tilde\rho_r}{1-\tilde\rho_r^{n_r}}\bigl[0,1,2\tilde\rho_r,\ldots,(n_r-1)\tilde\rho_r^{n_r-2}\bigr]^\top,\notag\\[10pt]
\frac{\partial\ppp_r}{\partial\mu_r}&=\frac{\partial\tilde\rho_r}{\partial\mu_r}\,\frac{\partial\ppp_r}{\partial\tilde\rho_r},\notag\\[10pt]
\frac{\partial\ppp_r}{\partial\sigma_r^2}&=\frac{\partial\tilde\rho_r}{\partial\sigma_r^2}\,\frac{\partial\ppp_r}{\partial\tilde\rho_r},\notag\\[10pt]
\frac{\partial\ppp}{\partial\mu_r}&=\ppp_1\otimes\cdots\otimes\ppp_{r-1}\otimes\frac{\partial\ppp_r}{\partial\mu_r}\otimes\ppp_{r+1}\otimes\cdots\otimes\ppp_d,\label{dpmu}\\[10pt]
\frac{\partial\ppp}{\partial\sigma_r^2}&=\ppp_1\otimes\cdots\otimes\ppp_{r-1}\otimes\frac{\partial\ppp_r}{\partial\sigma_r^2}\otimes\ppp_{r+1}\otimes\cdots\otimes\ppp_d,\label{dpsigma}\\[10pt]
\frac{\partial\mathbb E[W(\XX)]}{\partial\mu_r}&=\WW\cdot\frac{\partial\ppp}{\partial\mu_r},\label{demu}\\[10pt]
\frac{\partial\mathbb E[W(\XX)]}{\partial\sigma_r^2}&=\WW\cdot\frac{\partial\ppp}{\partial\sigma_r^2},\label{desigma}\\[10pt]
\frac{\partial{\rm Var}[W(\XX)]}{\partial\mu_r}&=\WW^2\cdot\frac{\partial\ppp}{\partial\mu_r}-2(\WW\cdot\ppp)\Bigl(\WW\cdot\frac{\partial\ppp}{\partial\mu_r}\Bigr),\label{dvmu}\\[10pt]
\frac{\partial{\rm Var}[W(\XX)]}{\partial\sigma_r^2}&=\WW^2\cdot\frac{\partial\ppp}{\partial\sigma_r^2}-2(\WW\cdot\ppp)\Bigl(\WW\cdot\frac{\partial\ppp}{\partial\sigma_r^2}\Bigr).\label{dvsigma}
\end{align}}%
\begin{figure}
\centering
\includegraphics[width=\textwidth]{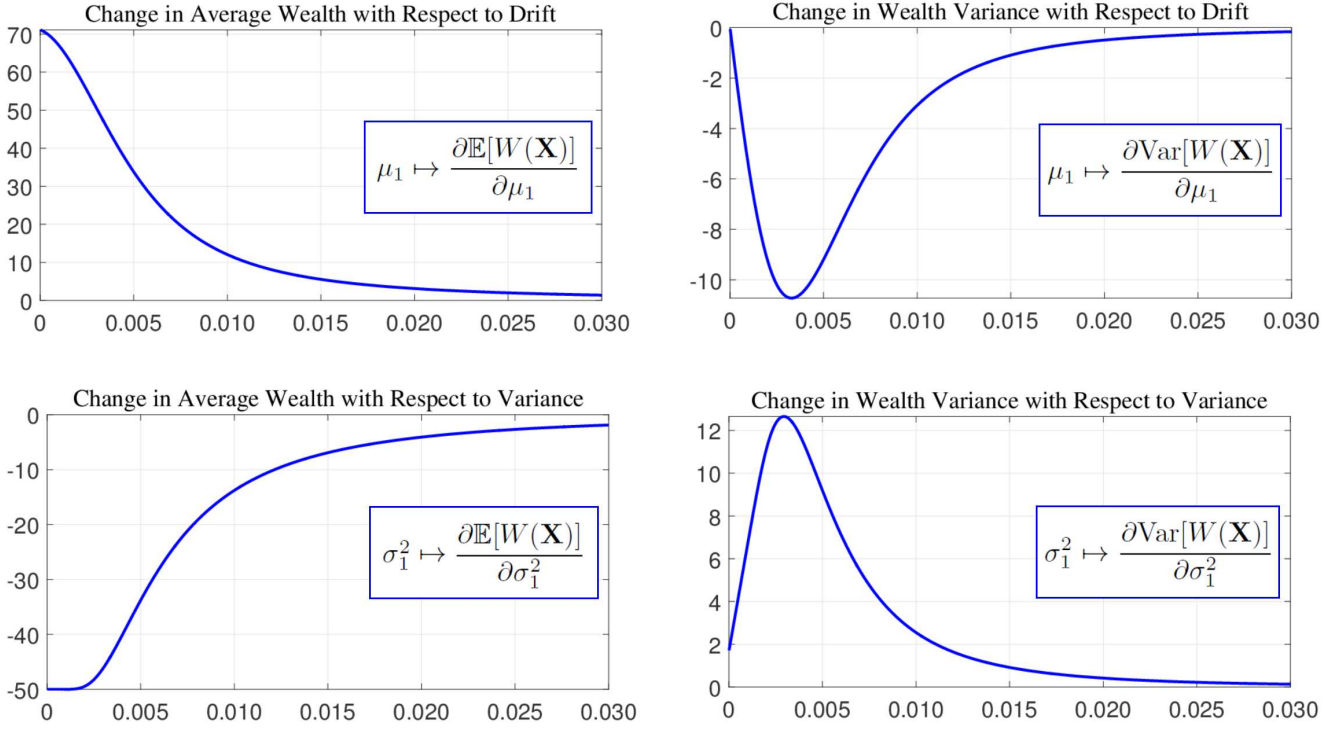}
\caption{Changes in moments of the wealth distribution. Parameters are chosen to be illustrative: $\mu_1 = \mu_2 = 0.01$, $\sigma_1^2 = \sigma_2^2 = 0.0025$, and $W(x_1,x_2)=x_1+x_2$.}
\label{fig:comparative-statics}
\end{figure}%
We remark that the above derivatives are defined even in the case $\mu_r=0$ and their values in this case are obtained by taking the limit of the corresponding expression as $\mu_r\to0$. The derivatives \eqref{dpmu}--\eqref{dpsigma} enable an analysis of how the steady state changes when properties of the underlying process change.  For example, if the volatility of housing prices $\sigma_1^2$ increases, equations~\eqref{dpmu}--\eqref{dpsigma} provide the resulting impact on the steady state.
The derivatives \eqref{demu}--\eqref{dvsigma} can be used to examine how key moments of the stationary distribution change. For example, a researcher could analyze the impact on the steady-state variance of the wealth distribution, i.e., ${\rm Var}[W(\XX)]$, in the case where the volatility of housing prices $\sigma_1^2$ increases.
Figure~\ref{fig:comparative-statics} illustrates this by showing how the mean and variance of the stationary wealth distribution change with respect to the parameters of the underlying stochastic process. The figure has been realized through a discretization of the square $[0,1]^2$ by a $n_1\times n_2$ lattice with $n_1=n_2=31$ points in each direction and (consequently) two equal discretization steps $\Delta_1=\Delta_2=1/30$. It should be noted, however, that the graphs in Figure~\ref{fig:comparative-statics} do not really depend on $n_1$ and $n_2$, because they converge to limiting graphs as $n_1,n_2\to\infty$ (and convergence is already reached for $n_1=n_2=31$).

\section{Conclusions and Perspectives}\label{sec:conc}
We have studied the spectral properties of the generator $T_{n,\epsilon,\phi}$ of the $\tau_{\epsilon,\phi}$ algebra introduced by Bozzo and Di Fiore in the context of matrix displacement decomposition \cite{BozzoDiFiore}. In particular:
\begin{itemize}[nolistsep,leftmargin=*]
	\item we have derived precise asymptotics for the outliers of $T_{n,\epsilon,\phi}$ and the associated eigenvectors;
	\item we have obtained equations for the eigenvalues of $T_{n,\epsilon,\phi}$, which automatically provide also the eigenvectors of $T_{n,\epsilon,\phi}$;
	\item we have computed the full eigendecomposition of $T_{n,\epsilon,\phi}$ in the case $\epsilon\phi=1$.
\end{itemize}
Finally, we have presented applications of our results to queuing models, random walks, diffusion processes, and economics, with a special emphasis on wealth/income inequality and portfolio dynamics.
We conclude this paper by mentioning a few possible future lines of research.
\begin{enumerate}[leftmargin=*,nolistsep]
	\item The applications presented herein do not exhaust all possible applications of the $\tau_{\epsilon,\phi}$ algebra. For example, matrices belonging to this algebra for suitable choices of $\epsilon$ and $\phi$ arise in the discretization of differential equations by finite difference methods, finite element methods and, as recently  discovered, isogeometric methods \cite[Section~3]{IgA-eig}. A future research could take care of investigating further discretizations where $\tau_{\epsilon,\phi}$ matrices arise and, consequently, the results of this paper find applications.
	\item On the economics side, Sections~\ref{mdp}--\ref{sec:economics} are interesting and useful, but the reflected constant-coefficient diffusion process $\XX(t)$ that has been considered therein is not sufficient to understand top income inequality, since in that case researchers need alternative specifications \cite{benhabib2018skewed,gabaix2016dynamics}. 
	That said, there could be a large class of stochastic processes $\hat\XX(t)$ that can be mapped to $\XX(t)$ through an appropriate change of measure.
	Loosely, given a stochastic process $\hat\XX(t)$, let $\hat W$ be a mapping such that $\hat W(\hat\XX(t))$ represents the ``wealth'' of an individual with portfolio $\hat\XX(t)$. Then, there may exist a change of measure $\mathbb P\to\mathbb Q$ (i.e., a Radon--Nikodym derivative ${\rm d}\mathbb Q/{\rm d}\mathbb P$) mapping $\hat\XX(t)$ to $\XX(t)$ and $\hat W(\hat\XX(t))$ to $W(\XX(t))$ for a suitable $W$. If so, then the computation of, say, the average wealth $\mathbb E_{\mathbb P}[\hat W(\hat\XX)]$ in the steady-state distribution of process $\hat\XX(t)$ could be traced back to computing the corresponding expectation $\mathbb E_{\mathbb Q}[W(\XX)]$ for process $\XX(t)$ as we have done in Section~\ref{sec:economics}; see \cite[Section~9.5]{campolieti2016financial} for an analysis of changes in probability measures and associated expectations, as well as for practical tools for working with such concepts.
	A careful investigation of all this topic may form the content of a future research that would extend the applicability of the results presented in this paper.
\end{enumerate}

\section*{Acknowledgements}
The authors wish to thank Carmine Di Fiore for useful discussions.
This work has been supported by the MIUR Excellence Department Project awarded to the Department of Mathematics of the University of Rome Tor Vergata (CUP E83C18000100006), by the Beyond Borders Programme of the University of Rome Tor Vergata through the Project ASTRID (CUP E84I19002250005), by the Research Group GNCS (Gruppo Nazionale per il Calcolo Scientifico) of INdAM (Istituto Nazionale di Alta Matematica), and by the Swedish Research Council through the International Postdoc Grant (Registration Number 2019-00495).


\begin{thebibliography}{99}

\footnotesize

\bibitem{achdou2014partial}
{\sc Achdou Y., Buera F. J., Lasry J.-M., Lions P.-L., Moll B.}
{\em Partial differential equation models in macroeconomics.}
Philos. Trans. Royal Soc. A 372 (2014) 20130397.

\bibitem{achdou2017income}
{\sc Achdou Y., Han J., Lasry J.-M., Lions P.-L., Moll B.}
{\em Income and wealth distribution in macroeconomics: a continuous-time approach.} 
Working Paper 23732, National Bureau of Economic Research (2017).

\bibitem{atkinson2011top}
{\sc Atkinson A. B., Piketty T., Saez E.}
{\em Top incomes in the long run of history.}
J. Econ. Lit. 49 (2011) 3--71.

\bibitem{Baldi}
{\sc Baldi P.}
{\em Stochastic Calculus: An Introduction Through Theory and Exercises.}
Springer, Cham (2017).

\bibitem{benhabib2018skewed}
{\sc Benhabib J., Bisin A.}
{\em Skewed wealth distributions: theory and empirics.}
J. Econ. Lit. 56 (2018) 1261--1291.

\bibitem{benhabib2019wealth}
{\sc Benhabib J., Bisin A., Luo M.}
{\em Wealth distribution and social mobility in the US: a quantitative approach.}
Am. Econ. Rev. 109 (2019) 1623--1647.

\bibitem{Julia}
{\sc Bezanson J., Edelman A., Karpinski S., Shah V. B.}
{\em Julia: a fresh approach to numerical computing.}
SIAM Rev. 59 (2017) 65--98.

\bibitem{BiniCIME}
{\sc Bini D. A.}
{\em Matrix structures in queuing models.}
In: Benzi M., Simoncini V. (Eds.), ``Exploiting Hidden Structure in Matrix Computations: Algorithms and Applications''. Lect. Notes Math. 2173 (2016) 65--159.

\bibitem{BC-tau}
{\sc Bini D., Capovani M.}
{\em Spectral and computational properties of band symmetric Toeplitz matrices.}
Linear Algebra Appl. 52--53 (1983) 99--126.

\bibitem{Bini}
{\sc Bini D., Capovani M., Menchi O.}
{\em Metodi Numerici per l'Algebra Lineare.}
Zanichelli, Bologna (1988).

\bibitem{BG}
{\sc B\"ottcher A., Grudsky S. M.}
{\em Spectral Properties of Banded Toeplitz Matrices.}
SIAM, Philadelphia (2005).

\bibitem{BozzoDiFiore}
{\sc Bozzo E., Di Fiore C.}
{\em On the use of certain matrix algebras associated with discrete trigonometric transforms in matrix displacement decomposition.}
SIAM J. Matrix Anal. Appl. 16 (1995) 312--326.

\bibitem{campolieti2016financial}
{\sc Campolieti G., Makarov R. N.}
{\em Financial Mathematics: A Comprehensive Treatment.}
CRC Press, Boca Raton (2014).

\bibitem{tau-appendix}
{\sc Ceccherini-Silberstein T., Scarabotti F., Tolli F.}
{\em Harmonic Analysis on Finite Groups: Representation Theory, Gelfand Pairs and Markov Chains.}
Cambridge University Press, New York (2008).

\bibitem{IgA-eig}
{\sc Ekstr\"om S.-E., Furci I., Garoni C., Manni C., Serra-Capizzano S., Speleers H.}
{\em Are the eigenvalues of the B-spline isogeometric analysis approximation of $-\Delta u=\lambda u$ known in almost closed form?}
Numer. Linear. Algebra Appl. 25 (2018) e2198.

\bibitem{gabaix2009power}
{\sc Gabaix X.}
{\em Power laws in economics and finance.}
Annu. Rev. Econ. 1 (2009) 255--293.

\bibitem{gabaix2016dynamics}
\textsc{Gabaix X., Lasry J.-M., Lions P.-L., Moll B.}
{\em The dynamics of inequality.} 
Econometrica 84 (2016) 2071--2111.

\bibitem{GLTbookI}
{\sc Garoni C., Serra-Capizzano S.}
{\em Generalized Locally Toeplitz Sequences: Theory and Applications (Volume I).}
Springer, Cham (2017).

\bibitem{GLTbookII}
{\sc Garoni C., Serra-Capizzano S.}
{\em Generalized Locally Toeplitz Sequences: Theory and Applications (Volume II).}
Springer, Cham (2018).

\bibitem{Giambene}
{\sc Giambene G.}
{\em Queuing Theory and Telecommunications: Networks and Applications.}
Second Edition, Springer, New York (2014).

\bibitem{HJ}
{\sc Horn R. A., Johnson C. R.}
{\em Matrix Analysis.}
Second Edition, Cambridge University Press, New York (2013).

\bibitem{DE}
{\sc Kelley W. G., Peterson A. C.}
{\em Difference Equations: An Introduction with Applications.}
Second Edition, Academic Press, San Diego (2001).

\bibitem{lawler2006introduction}
{\sc Lawler G. F.} 
{\em Introduction to Stochastic Processes.} 
Second Edition, CRC Press, Boca Raton (2006).

\bibitem{luttmer2007selection}
{\sc Luttmer E. G. J.}
{\em Selection, growth, and the size distribution of firms.}
Q. J. Econ. 122 (2007) 1103--1144.

\bibitem{luttmer2012slow}
{\sc Luttmer E. G. J.}
{\em Slow convergence in economies with firm heterogeneity.} 
Working Paper 696, Federal Reserve Bank of Minneapolis (2012).

\end{thebibliography}
\end{document}